\numberwithin{equation}{section}
\newcommand{\w}{\mathbf{w}}
\newcommand{\bb}{\mathbf{b}}
\newcommand{\kk}{\mathbf{\kappa}}
\newtheorem{Theorem}{Theorem}[section]
\newtheorem{Lemma}[Theorem]{Lemma}
\newtheorem{Remark}{Remark}[section]
\begin{document}

\title[compressible magnetohydrodynamic equations with vacuum]
{Global strong solutions to the planar compressible magnetohydrodynamic equations with large initial data and vacuum}

\date {\today}
\author{Jishan Fan}
\address{ Department of Applied Mathematics,
 Nanjing Forestry University, Nanjing  210037, P.R.China}
\email{fanjishan@njfu.edu.cn}

\author{Shuxiang Huang}
 \address{School of Mathematics, Shandong University, Jinan 250100, P.R. China}
 \email{huangs@sdu.edu.cn}

\author[Fucai Li]{ Fucai Li}
\address{ Department of Mathematics,
  Nanjing University, Nanjing 210093, P.R.China}
\email{fli@nju.edu.cn}

\begin{abstract}
This paper considers the initial boundary problem to
 the planar compressible magnetohydrodynamic equations with large initial data and vacuum. The global existence and uniqueness of
  large strong solutions are established when the heat conductivity  coefficient $\kappa(\theta)$
satisfies
\begin{equation*}
C_{1}(1+\theta^q)\leq \kappa(\theta)\leq C_2(1+\theta^q)
\end{equation*}
  for some constants $q>0$, and $C_1,C_2>0$.
  \end{abstract}

\keywords{planar compressible magnetohydrodynamic equations; large initial data; vacuum; global well-posedness}
\subjclass[2010]{35Q30; 35K65; 76N10}
\maketitle

\section{Introduction}

Magnetohydrodynamics (MHD) studies the dynamics of conducting fluids in an magnetic field. The MHD finds its way in a very wide range of
physical objects, from liquid metals to cosmic plasmas, for example, see  \cite{C,JT64,KL,LL,PD}. The
governing equations of compressible  planar magnetohydrodynamic flows, which implies that the flows are uniform  in the transverse directions,  take the
following form:
\begin{align}
&\rho_t+(\rho u)_x=0,\label{aa1}\\
&(\rho u)_t+\left(\rho u^2+P+\frac{1}{2}|\bb|^2\right)_x=(\lambda
u_x)_x,\label{aa2}\\
&(\rho \w)_t+(\rho u \w-\bb)_x=(\mu \w_x)_x,\label{aa3}\\
&\bb_t+(u \bb-\w)_x=(\nu \bb_x)_x,\label{aa4}\\
&(\rho e)_t+(\rho u e)_x-(\kappa e_x)_x=\lambda
u_x^2+\mu|\w_x|^2+\nu|\bb_x|^2-Pu_x,\label{aa5}
\end{align}
where  $\rho\geq 0
$ denotes the density of the flow, $u\in\mathbb{R}$ the
longitudinal velocity, $\w\in\mathbb{R}^2$ the transverse velocity,
$\bb\in\mathbb{R}^2$ the transverse magnetic field, and $e$ the
internal energy, respectively.
Both the pressure $P$ and the internal
energy $e$ are generally related to the density and temperature of the flow according to the
equations of state: $P= P(\rho, \theta)$ and $e= e(\rho,\theta)$.
The parameters  $\lambda=\lambda (\rho, \theta) $ and $\mu=\mu(\rho, \theta)$ denote  the bulk
and the shear viscosity coefficients, respectively;  $\nu=\nu(\rho, \theta)$ is the magnetic diffusivity acting as a
magnetic diffusion coefficient of the magnetic field, and $\kappa = \kappa(\rho,\theta)$ is the heat conductivity.

The system (\ref{aa1})--(\ref{aa5}) are supplemented with the following initial and boundary conditions:
\begin{align}
 &(u,\w,\bb,\theta_x)|_{\partial\Omega} =0,\label{aa6}\\
 &(\rho,u,\w,\bb,\theta)|_{t=0}=\big(\rho_0(x),u_0(x),\w_0(x),\bb_0(x),\theta_0(x)\big),\label{aa7}
\end{align}
where $\partial\Omega=\{0,1\}$ denotes the boundary of the interval $\Omega:=(0,1)$. The conditions (\ref{aa6}) mean that the
boundary is non-slip and thermally insulated.

There have been a lot of studies on MHD by physicists and mathematicians
due to its physical importance, complexity, rich phenomena, and mathematical
challenges. Below we mention some mathematical results on existence theory of the compressible MHD equations, the interested readers can refer
\cite{C,JT64,KL,LL,PD} for complete discussions on  physical aspects.
We begin with the one-dimensional case. The existence and
uniqueness of local smooth solutions were proved firstly in \cite{VH}, while the existence
of global smooth solutions with small smooth initial data was shown in \cite{KO}.
The exponential stability of small smooth solutions was obtained in \cite{LZ,St}. In \cite{TH,HT},
Hoff and Tsyganov obtained the global existence and uniqueness of weak solutions with small initial energy.
Under the technical condition that $\kappa(\rho,\theta)$, depending the temperature $\theta$ only, i.e., $\kappa(\rho,\theta) \equiv \kappa(\theta)$,
satisfies
\begin{equation}
C_{1}(1+\theta^q)\leq \kappa(\theta)\leq C_2(1+\theta^q),\label{aa8}
\end{equation}
for  some constants  $q\geq2$ and $C_1,C_2>0$, Chen and Wang  \cite{CWa} proved the existence, uniqueness,
and Lipschitz continuous dependence of global strong solutions
to the system (\ref{aa1})--(\ref{aa5}) with large initial data satisfying
\begin{align*}
 0<\inf_{x\in \Omega}\rho_0(x)\leq\rho_0(x)\leq\sup_{x\in \Omega}\rho_0(x)<\infty;~\rho_0,\,u_0,\,\w_0,\, \bb_0,\,\theta_0\in
H^1(\Omega), ~\inf_{x\in \Omega} \theta_0>0.
\end{align*}
The similar results are  obtained in
\cite{CWb,W} for real gas cases. Recently, Fan, Jiang and Nakamura  \cite{FJNa} obtained the global weak solutions
to the problem  (\ref{aa1})--(\ref{aa5}) when the initial data satisfying  the condition \eqref{aa8} with $q\geq 1$ and
\begin{align*}
  \rho_0^{-1},\rho_0\in L^\infty(\Omega); \ \  \rho_0,\,u_0,\,\w_0,\, \bb_0 \in L^2(\Omega); \ \  \theta_0\in L^1(\Omega), ~\inf_{x\in \Omega} \theta_0>0.
\end{align*}
Later they \cite{FJNb} obtained  the existence, the uniqueness and the Lipschitz continuous
dependence on the initial data of global weak solutions to  the problem  (\ref{aa1})--(\ref{aa7}) when the initial data
lie in the Lebesgue spaces. Recently, Hu and Ju \cite{HJ} considered the problem \eqref{aa1}--\eqref{aa7} under the assumption that the heat conductivity depends on temperature with \begin{equation}\label{PZZZ}
\kappa(\theta)=\theta^q, \quad  q>0,  
\end{equation} and obtained the existence and uniqueness of global strong solutions with large initial data. Their methods  are different from the one used here.
In fact, their arguments  were motivated by the ideas developed by Pan and Zhang \cite{PZ} where 
  the global existence of smooth solutions to
the 1-d compressible Navier-Stokes equations with arbitrarily large initial data under the condition \eqref{PZZZ} was obtained..

For the multi-dimensional
compressible MHD equations, there are also many mathematical
results on existence of solutions. As mentioned before, Vol'pert and Hudjaev \cite{VH} first
obtained   the local smooth solutions to the compressible MHD equations. Li, Su and
Wang \cite{LSW} obtained  the existence and uniqueness of local in
time strong solution with large initial data when the initial
density has an positive lower bound.  Fan and  Yu \cite{FYb}
obtained the strong solution to the compressible MHD equations with
vacuum. Kawashima \cite{K} obtained the smooth solutions for
two-dimensional compressible MHD equations when the initial data is
a small perturbation of given constant state. Umeda, Kawashima and
Shizuta \cite{UKS} obtained the decay of solutions to the linearized
MHD equations.  Li and Yu \cite{LY} obtained the optimal decay rate
of small smooth solutions.   In \cite{HW1, HW2}, Hu and Wang
obtained the global existence of weak solutions to the isentropic
compressible MHD equations and variational  solutions to the full
compressible MHD equations, see also \cite{FYa,DF,ZJX} for related
results.  Suen and  Hoff \cite{SH} obtained the  global low-energy
weak solutions of the  isentropic compressible MHD equations.

It should be pointed out that although there are many progress on
compressible MHD equations it is still an open question in obtaining the
global strong or smooth solutions to the full compressible MHD
equations with large initial data and possible vacuum even in the
one dimensional case, see \cite{HW2}.

In the present paper we study the global existence and uniqueness of
  large strong solutions to
 the planar compressible magnetohydrodynamic equations
 (\ref{aa1})-(\ref{aa5}) with large initial data and vacuum.
We focus on the perfect gas case:
$$ P(\rho,\theta):=R\rho\theta, \quad e:=C_V\theta,$$
where  $R\! >0$ is the
gas constant and $C_V\! >0$ is the heat capacity of the gas at constant
volume. We will consider the case that  the coefficients $\lambda$, $\mu$, and  $\nu$ are positive constants
 and the heat conductivity coefficient depends on the temperature $\theta$ only, i.e., $\kappa(\rho,\theta) \equiv \kappa(\theta)$.

The main result in this paper reads as follows.
\begin{Theorem}\label{thaa1}
Let $\kappa\in C^2[0,\infty)$ satisfies the condition (\ref{aa8}) for some $q>0$. Suppose that the initial data
$(\rho_0,u_0,\w_0,\bb_0,\theta_0)$ satisfy $\rho_0\geq0,~\theta_0\geq0,~\rho_0\in H^2(\Omega),~u_0\in H_0^1(\Omega)\cap
H^2(\Omega), \w_0,\bb_0\in H_0^1(\Omega)\cap
H^2(\Omega),~\theta_0\in H^2(\Omega),~(\theta_0)_x|_{\partial\Omega}=0$ and the
following compatibility condition:
\begin{equation}
\left\{
\begin{array}{l}
\lambda (u_{0})_{xx}-\left(\rho_0\theta_0+\frac{1}{2}|\bb_0|^2\right)_x=\sqrt{\rho_0}\,\,g_1,\\
\mu ( \w_{0})_{xx}-(\bb_{0})_{x}=\sqrt{\rho_0}\,\,\mathbf{g}_2,\\
\big(\kk(\theta_0)(\theta_{0})_{x}\big)_x+
[(u_{0})_{x}]^2+ |(\w_{0})_{x}|^2+ |(\bb_{0})_{x}|^2=\sqrt{\rho_0}\,\,g_3,
\end{array}
\right.\label{aa9}
\end{equation}
for some $g_1, \mathbf{g}_2, g_3\in L^2(\Omega)$. Then, for any $T>0$, there exists a
unique global solution $(\rho,u,\w,\bb,\theta)$ to the problem
(\ref{aa1})--(\ref{aa7}) such that
\begin{align*}
&\rho\in L^\infty([0,T];H^2(\Omega)),~\rho_t\in L^\infty([0,T];H^1(\Omega)),\\
&(u,\w,\bb,\theta)\in L^\infty(0,T;H^2(\Omega))\cap L^2(0,T;H^3(\Omega)),\\
&(\sqrt\rho u_t,\sqrt\rho \w_t,\bb_t,\sqrt\rho\theta_t)\in L^\infty(0,T;L^2(\Omega)),\\
&(u_t,\w_t,\bb_t,\theta_t)\in L^2(0,T;H^1(\Omega)).
\end{align*}

\end{Theorem}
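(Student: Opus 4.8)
The plan is to construct the solution locally in time and then extend it to $[0,T]$ for arbitrary $T$ by a priori estimates that do not depend on the length of the existence interval. For the local theory I would work in a framework adapted to vacuum: regularize the initial density by $\rho_0^\delta:=\rho_0+\delta$ so that $\rho_0^\delta\ge\delta>0$, solve the resulting non-degenerate problem by a linearization/fixed-point iteration, and then pass to the limit $\delta\to0$. The compatibility conditions (\ref{aa9}) are exactly what is needed so that $\sqrt{\rho_0}\,\dot u_0$, $\sqrt{\rho_0}\,\dot\w_0$, $\sqrt{\rho_0}\,\dot\theta_0$ (and $\bb_t|_{t=0}$) lie in $L^2(\Omega)$ uniformly in $\delta$; this is what produces the $L^\infty(0,T;L^2)$ bounds on $\sqrt\rho u_t$, $\sqrt\rho\w_t$, $\bb_t$, $\sqrt\rho\theta_t$ claimed in the theorem and what survives the vacuum limit. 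Once local existence and a continuation principle are in place, everything reduces to closing the a priori estimates.

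For the low-order estimates I would first record the conserved quantities: the mass $\int_\Omega\rho\,dx=\int_\Omega\rho_0\,dx$ and the total energy, which together with the homogeneous boundary conditions give the basic bound
\begin{equation*}
\sup_{0\le t\le T}\int_\Omega\Big(\tfrac12\rho u^2+\tfrac12\rho|\w|^2+\tfrac12|\bb|^2+\rho\theta\Big)\,dx
+\int_0^T\!\!\int_\Omega\big(u_x^2+|\w_x|^2+|\bb_x|^2\big)\,dx\,dt\le C.
\end{equation*}
In particular $\theta\in L^\infty(0,T;L^1)$, and $\bb$ enjoys good parabolic estimates coming from its own (density-unweighted) equation $\bb_t+(u\bb-\w)_x=\bb_{xx}$, which is why the theorem lists $\bb_t$ rather than $\sqrt\rho\,\bb_t$. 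The crucial lower-order step is then the pointwise upper bound on $\rho$. I would obtain it through the effective viscous flux $F:=u_x-\rho\theta-\tfrac12|\bb|^2$, which satisfies $F_x=\rho(u_t+uu_x)$, together with the transport identity $D_t\log\rho=-u_x=-F-\rho\theta-\tfrac12|\bb|^2$ along particle paths (equivalently, after passing to Lagrangian mass coordinates); the dissipative sign of the $-\rho\theta$ term prevents the density from concentrating, giving $\rho\in L^\infty(\Omega\times(0,T))$. No positive lower bound on $\rho$ is available or needed.

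The main obstacle is the a priori control of the temperature under the weak assumption $q>0$ (Chen--Wang \cite{CWa} required $q\ge2$ and $\inf\rho_0>0$). Using mass conservation the energy equation becomes $\rho(\theta_t+u\theta_x)=(\kappa(\theta)\theta_x)_x+u_x^2+|\w_x|^2+|\bb_x|^2-\rho\theta u_x$, whose sources are nonnegative but only space--time integrable, while the degeneracy of $\rho$ forbids a direct maximum principle. I would exploit the structure of the dissipation $\int_0^T\!\!\int_\Omega\kappa(\theta)\theta_x^2\,dx\,dt\sim\int_0^T\!\!\int_\Omega(1+\theta^q)\theta_x^2\,dx\,dt$ by testing the equation against powers of $\theta$ (or against $\mathcal K(\theta):=\int_0^\theta\kappa(s)\,ds$), converting the weak conductivity into Sobolev control of $\theta^{(q+1)/2}$ and related quantities; interpolating these against the $L^1$ bound and feeding back the quadratic sources (controlled by the velocity and magnetic gradient estimates) yields, after a bootstrap, $\theta\in L^\infty(\Omega\times(0,T))$ together with $\int_0^T\!\!\int_\Omega\theta_x^2$-type bounds. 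Making the exponents close for every $q>0$ rather than only $q\ge2$, and doing so while $\rho$ is allowed to vanish, is where the real work lies.

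With $\rho$ and $\theta$ bounded and the first-order dissipation in hand, the higher-order estimates follow a standard if lengthy route. I would differentiate the momentum, $\w$-, $\bb$- and energy equations in $t$ and test against $\dot u$, $\dot\w$, $\bb_t$, $\dot\theta$ to propagate the $L^\infty(0,T;L^2)$ bounds on $\sqrt\rho u_t,\sqrt\rho\w_t,\bb_t,\sqrt\rho\theta_t$ (using (\ref{aa9}) to control the initial values), then combine these with elliptic and parabolic regularity (treating $u_{xx}$, $\w_{xx}$, $\bb_{xx}$, $(\kappa(\theta)\theta_x)_x$ as the leading terms) to upgrade to $u,\w,\bb,\theta\in L^\infty(0,T;H^2)\cap L^2(0,T;H^3)$ and $(u_t,\w_t,\bb_t,\theta_t)\in L^2(0,T;H^1)$; the density regularity $\rho\in L^\infty(0,T;H^2)$, $\rho_t\in L^\infty(0,T;H^1)$ comes from differentiating the transport equation and using the velocity bounds. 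Since all these estimates are independent of $T$, the continuation argument yields the global solution. Finally, uniqueness is proved by a Gronwall estimate on the difference of two solutions in a weighted energy norm, with the $\sqrt\rho$ weights handling the vacuum and the regularity of one solution serving as a multiplier.
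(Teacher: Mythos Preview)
Your overall architecture---regularize $\rho_0$ by $\rho_0+\delta$, prove local well-posedness for the non-degenerate problem, close $\delta$-independent a priori bounds, then let $\delta\to0$---matches the paper exactly, as do the density upper bound via the effective viscous flux and the higher-order bootstrap via time-differentiation plus elliptic regularity. The difference, and it is a genuine gap, lies in the order of the low-order estimates and in the specific temperature multiplier.

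You assert that energy conservation together with the boundary conditions directly gives
\[
\sup_{0\le t\le T}\int_\Omega\Big(\tfrac12\rho u^2+\tfrac12\rho|\w|^2+\tfrac12|\bb|^2+\rho\theta\Big)\,dx
+\int_0^T\!\!\int_\Omega\big(u_x^2+|\w_x|^2+|\bb_x|^2\big)\,dx\,dt\le C,
\]
and then plan to feed this dissipation into the temperature estimate. But the kinetic-energy identity is
\[
\tfrac12\tfrac{d}{dt}\!\int(\rho u^2+\rho|\w|^2+|\bb|^2)\,dx+\int(u_x^2+|\w_x|^2+|\bb_x|^2)\,dx=\int \rho\theta\,u_x\,dx,
\]
so the unweighted dissipation bound is unavailable until you control $\int_0^T\|\theta\|_{L^\infty}\,dt$; conversely your proposed temperature argument (``feeding back the quadratic sources controlled by the velocity and magnetic gradient estimates'') presupposes that very dissipation bound. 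This circularity is what blocks the naive route for $q$ small. The paper breaks it by first extracting from the entropy identity only the weighted bound $\iint_{Q_T}\theta^{-1}(u_x^2+|\w_x|^2+|\bb_x|^2)+\kappa(\theta)\theta^{-2}\theta_x^2\le C$, and then---this is the key step you do not name---multiplying the temperature equation by $\theta^{-\alpha}$ with $0<\alpha<\min\{1,q\}$. That negative-power test function produces
\[
\iint_{Q_T}\frac{u_x^2+|\w_x|^2+|\bb_x|^2}{\theta^\alpha}+\frac{(1+\theta^q)\theta_x^2}{\theta^{1+\alpha}}\le C,\qquad
\int_0^T\|\theta\|_{L^\infty}^{\,q-\alpha+1}\,dt\le C,
\]
with no prior control on $\iint u_x^2$ required (the $Pu_x\theta^{-\alpha}$ term is absorbed by Cauchy since $\rho^2\theta^{2-\alpha}\le C\theta^{1-\alpha}$). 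Only after this does one obtain the unweighted $\iint_{Q_T}(u_x^2+|\w_x|^2+|\bb_x|^2)\le C$. Note also that the paper never proves $\theta\in L^\infty(Q_T)$ at this stage---only time-integrated $L^\infty$ control---and the higher estimates (multiplying by $\theta^{q+1}$, then by $\kappa(\theta)\theta_t$, etc.) are run with that weaker input; your plan to secure $\theta\in L^\infty$ before the higher-order bootstrap is more than what is actually achievable, and not needed.
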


There are two ingredients in our result comparing with the previous results on one-dimensional
  MHD equations mentioned above. First, in our result the initial density may contain the vacuum provided
  that it satisfies the compatible conditions \eqref{aa9}. Next, a relaxed condition
  on the heat conductivity coefficient  is permitted. In fact, in \eqref{aa8}, we only need $q>0$ while
  in \cite{CWa,CWb,W} required $q\geq 2$ and in \cite{FJNa} with $q\geq 1$. We point out that the
  assumption $q>0$ paly a crucial role in our arguments, see Lemmas 2.3, 2.5, 2.7, and 2.9 below.

\begin{Remark}
As has been observed in \cite{CH06},  the lack of a positive lower bound of $\rho_0$ should be compensated
with some conditions on the initial data $(\rho_0,\w_0,b_0,\theta_0)$. 
 Roughly speaking, the compatibility condition \eqref{aa9} is equivalent to the $L^2$-integrability of
$\sqrt{\rho} u_t, \sqrt{\rho}\w_t,$ and $ \sqrt{\rho}\theta_t$
 at $t = 0$, as can be shown formally by letting $ t \rightarrow 0$ in  \eqref{aa2}, \eqref{aa3}, and \eqref{aa5}. Hence the condition \eqref{aa9} plays
a key role in the  estimates of   $(\sqrt{\rho} u_t, \sqrt{\rho}\w_t, \bb_t, \sqrt{\rho}\theta_t)$.
This was observed and justified rigorously  in \cite{CH06} for viscous polytropic fluids. Note that
the  condition \eqref{aa9}  is satisfied automatically for all initial data $(\rho_0,u_0,\w_0,\bb_0,\theta_0)$  with the
regularity presented in Theorem \ref{thaa1}  whenever $\rho_0$ is bounded away from zero.
\end{Remark}

\begin{Remark}
 It is possible  to extend our results to the one-dimensional compressible MHD system with more
 general state of equations:
 \begin{align*}
   P=\rho^2\frac{\partial e}{\partial \rho}+\theta \frac{\partial P}{\partial \theta}
 \end{align*}
 with some additional assumptions.
\end{Remark}

\begin{Remark}
When there is no vacuum initially, we can improve the results in \cite{FJNa} to  the case $q>0$
 by applying the arguments developed here.
\end{Remark}

 We remark that
when taking $\w=\bb=0$  the system (\ref{aa1})--(\ref{aa7}) reduces to the
well-known one-dimensional full Navier-Stokes equations and there are a lot of studies on this system.
In the case of that the initial density is bounded away from zero, Kazhikhov  and Shelukhin
\cite{KS} first obtained the global smooth solutions for large initial data three decades ago,see also \cite{AZa,ZAa,ZAb,Kaw} for  different extensions. Recently, Huang and Li \cite{HL} obtain the
global smooth solutions to the full Navier-Stokes system with possible vacuum
and large oscillations provided that the total initial energy is sufficient small.
 Wen and Zhu obtained the global smooth solutions to the one-dimensional full Navier-Stokes system
  \cite{WZa} and symmetric higher dimensional full  Navier-Stokes system  \cite{WZb} with large initial data.
  For the variational or weak solutions to the full Navier-Stokes system, see \cite{BD07, Fei04}.

   We give a few words on the strategy of the proof.  Since the initial data may contains the vacuum we first
 construct  the regularized initial density $\rho_{0 \delta}(x)=\rho_0+\delta$  for any $\delta>0$.
 Next, for each fixed $\delta$, we can obtain the local and uniqueness  existence of strong solutions.
  Third, we establish   sufficient  \emph{a priori} estimates uniformly with $\delta$.
 Combining the local existence result,  and the uniformly  a priori  estimates,
 we obtain the desired  global existence result by taking the limiting as $\delta \rightarrow 0^+$  and
 applying standard continuity argument.
We remark that the key point in the whole proof is to obtain uniformly  a priori  estimates  where
some ideas developed in \cite{WZa,WZb} are adapted.  Comparing with \cite{WZa}, the main
additional difficulties are due to the presence of the magnetic field and its interaction with the
hydrodynamic motion of the flow of large oscillation.
We shall deal with the  terms involving the magnetic field very carefully, see especially Lemmas
\ref{lebb5}-\ref{lebb7} below.

%
%

Before leaving this introduction we recall the following auxiliary inequalities.

\begin{Lemma}[\cite{Fei04, WZa}]\label{pona}
Let $  \Omega=(0,1)$ be an interval in $\mathbb{R}^1$.

(i). Assume that
$\rho$ is  a non-negative function  satisfying
\begin{equation*}
 0<M\leq\int_\Omega \rho\,dx\leq K,
\end{equation*}
for two  constants $M$ and $K$.
 Then, for any $v\in H^1(\Omega)$, it holds
\begin{equation*}
 \|v\|_{L^\infty(\Omega)}\leq \frac{K}{M}\|v_x\|_{L^2(\Omega)}+\frac{1}{M}\left|\int_\Omega \rho vdx\right|,
\end{equation*}

  (ii) Assume further that  $v$ satisfies
$$
\|\rho v\|_{L^1(\Omega)}\le \bar{C}.
$$
Then for any $r>0$, there exists a positive constant $  C=C(M, K, r,
\bar{C})$ such that
\begin{equation*}
   \|v^r\|_{L^\infty(\Omega)}\leq
C\|(v^r)_x\|_{L^2(\Omega)}+  C.
\end{equation*}
\end{Lemma}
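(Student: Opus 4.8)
The plan is to derive both inequalities from the one-dimensional fundamental theorem of calculus combined with the weighted averaging supplied by $\rho$.

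For part (i), fix $x \in \overline{\Omega}$. Since $v \in H^1(\Omega) \hookrightarrow C(\overline{\Omega})$ in one dimension, for every $y \in \Omega$ one has $v(x) = v(y) - \int_x^y v_s\,ds$, whence $|v(y) - v(x)| \le \int_\Omega |v_x|\,dx \le \|v_x\|_{L^2(\Omega)}$ by Cauchy--Schwarz and $|\Omega| = 1$. I would then start from the algebraic identity
\[
v(x)\int_\Omega \rho\,dy = \int_\Omega \rho v\,dy - \int_\Omega \rho(y)\bigl(v(y)-v(x)\bigr)\,dy,
\]
bound the last integral by $\|v_x\|_{L^2(\Omega)}\int_\Omega \rho\,dy \le K\,\|v_x\|_{L^2(\Omega)}$, and divide by $\int_\Omega \rho\,dy \ge M$ to get
\[
|v(x)| \le \frac{1}{M}\Bigl|\int_\Omega \rho v\,dy\Bigr| + \frac{K}{M}\|v_x\|_{L^2(\Omega)}.
\]
Taking the supremum over $x$ gives the claim; the only ingredients are the two-sided control $M \le \int_\Omega \rho \le K$ and the one-dimensional embedding, so no real difficulty arises here.

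For part (ii), I would apply (i) to $v^r$ (working with $v \ge 0$, as in the intended applications), obtaining
\[
\|v^r\|_{L^\infty(\Omega)} \le \frac{K}{M}\|(v^r)_x\|_{L^2(\Omega)} + \frac{1}{M}\int_\Omega \rho\, v^r\,dx,
\]
and then control $\int_\Omega \rho\, v^r\,dx$ using only $\|\rho v\|_{L^1(\Omega)} \le \bar C$. This last estimate is the heart of the matter and splits into two cases. When $0 < r \le 1$, Jensen's inequality for the probability measure $\rho\,dx/\int_\Omega \rho$, together with the concavity of $t \mapsto t^r$, gives $\int_\Omega \rho\, v^r\,dx \le \bigl(\int_\Omega \rho\bigr)^{1-r}\bigl(\int_\Omega \rho v\bigr)^r \le K^{1-r}\bar C^r$, a constant depending only on $K,\bar C,r$, and the conclusion is immediate. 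When $r > 1$, I would instead write $\int_\Omega \rho\, v^r\,dx = \int_\Omega \rho v\cdot v^{r-1}\,dx \le \bar C\,\|v^r\|_{L^\infty(\Omega)}^{(r-1)/r}$.

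The step I expect to require the most care is the $r > 1$ case, where $\|v^r\|_{L^\infty(\Omega)}$ appears on both sides of the inequality. Since the exponent $(r-1)/r$ is strictly less than $1$, Young's inequality allows the absorption
\[
\frac{\bar C}{M}\|v^r\|_{L^\infty(\Omega)}^{(r-1)/r} \le \tfrac12\|v^r\|_{L^\infty(\Omega)} + C(M,r,\bar C),
\]
after which moving the first term to the left-hand side and rearranging produces $\|v^r\|_{L^\infty(\Omega)} \le C\|(v^r)_x\|_{L^2(\Omega)} + C$ with $C = C(M,K,r,\bar C)$. Tracking the constants through this absorption is routine but is the one place where the estimate is not entirely automatic.
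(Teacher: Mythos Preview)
The paper does not supply its own proof of this lemma; it is quoted from \cite{Fei04, WZa} as a known auxiliary result, so there is no in-paper argument to compare against. Your proof is correct and is essentially the standard derivation: part~(i) is exactly the weighted mean-value argument one expects, and in part~(ii) your case split on $r\le 1$ versus $r>1$ (Jensen for concavity, then the absorption via Young's inequality when $r>1$) is the natural way to close the estimate. One small point worth making explicit is that you implicitly assume $v^r\in H^1(\Omega)$ in order to invoke~(i); this is harmless since the right-hand side is otherwise infinite, but you might state it for completeness.
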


%
%
  \medskip

\section{Proof of Theorem \ref{thaa1}}

In this section  we will prove Theorem \ref{thaa1} by consider  the initial density
$\rho_{0\delta} =\rho_0+\delta$,  as mentioned before, to get a sequence of approximate
solutions to (\ref{aa1})--(\ref{aa7}), then taking
$\delta\rightarrow0^+$ after making some   a priori  estimates uniformly for
$\delta$. Since the proof of the local existence and  uniqueness of strong solutions to the
approximate problem is now standard
\cite{CH06,FYb}, thus we only need
to establish the uniform estimates.

Below we still use $(\rho,u,\w,\bb,\theta)$ to denote the smooth solutions of
approximate problem  to (\ref{aa1})--(\ref{aa7}).
We shall
 denote $Q_T:=\Omega\times [0,T]$ with $T>0$ and  omit the spatial domain $\Omega$ in integrals for convenience.
We use $C$  to denote
the constants which are independent of  $\delta$, but possible depending on $T$, and may change from line to line.

To begin with the proof, we notice that the total mass and energy in the system
(\ref{aa1})--(\ref{aa5}) are conserved. In fact, by rewriting (\ref{aa1})--(\ref{aa5}) one has
\begin{align}
&\mathcal{E}_t+\left[u\left(\mathcal{E}+P+\frac{1}{2}|\bb|^2\right)-\w\cdot
\bb\right]_x=(\lambda uu_x+ \mu \w\cdot\w_x+ \nu
 \bb\cdot\bb_x+\kappa(\theta)\theta_x)_x,\label{bb1}\\
&(\rho \mathcal{S})_t+(\rho
u\mathcal{S})_x-\left(\frac{\kappa(\theta)}{\theta}\theta_x\right)_x=\frac{
\lambda u_x^2+\mu |\w_x|^2+\nu |\bb_x|^2}{\theta}+\frac{\kappa(\theta)\theta_x^2}{\theta^2},\label{bb2}
\end{align}
where $\mathcal{E}$ and $\mathcal{S}$ are the total energy and the entropy,
respectively,
$$\mathcal{E}:=\rho\Big(\theta+\frac{1}{2}(u^2+|\w|^2)\Big)+\frac{1}{2}|\bb|^2,\quad \mathcal{S}:=\ln\theta-\ln\rho.$$
Integrating (\ref{aa1}), (\ref{bb1}) and (\ref{bb2}) over
$Q_T$, we have
\begin{Lemma}\label{lebb1}
\begin{align*}
&\int \rho(x,t)dx=\int \rho_0(x)dx, \qquad \int
\mathcal{E}(x,t)dx=\int  \mathcal{E}(x,t=0)dx,\\
&\int (\rho\ln\rho+\rho|\ln\theta|)(x,t)dx+\iint_{Q_T} \left(\frac{
u_x^2+|\w_x|^2+|\bb_x|^2}{\theta}+\frac{\kappa(\theta)\theta_x^2}{\theta^2}\right)dx
dt\leq C.
\end{align*}
\end{Lemma}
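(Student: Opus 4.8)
The plan is to read off conservation of mass and energy by integrating the divergence-form equations \eqref{aa1} and \eqref{bb1} over $\Omega$, and then to extract the combined entropy bound from \eqref{bb2}.

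First I would integrate \eqref{aa1} in $x$ over $\Omega=(0,1)$: the flux $(\rho u)_x$ contributes only the boundary values $\rho u|_{\partial\Omega}$, which vanish since $u|_{\partial\Omega}=0$ by \eqref{aa6}, giving $\frac{d}{dt}\int\rho\,dx=0$ and hence conservation of mass. The same computation applied to \eqref{bb1} kills every boundary contribution: the convective flux $u(\mathcal E+P+\frac12|\bb|^2)-\w\cdot\bb$ and the dissipative flux $uu_x+\w\cdot\w_x+\bb\cdot\bb_x+\kappa(\theta)\theta_x$ all vanish on $\partial\Omega$ because $u=\w=\bb=0$ and $\theta_x=0$ there. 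This yields $\frac{d}{dt}\int\mathcal E\,dx=0$, i.e. conservation of total energy. In particular, from the nonnegativity of each piece of $\mathcal E$ I obtain the uniform bound $\int\rho\theta\,dx\le E:=\int\mathcal E(\cdot,0)\,dx$, which is the main tool for the rest.

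For the entropy inequality I would integrate \eqref{bb2} in $x$; again the convective flux $(\rho u\mathcal S)_x$ and the diffusive flux $(\frac{\kappa(\theta)}{\theta}\theta_x)_x$ integrate to boundary terms that vanish by \eqref{aa6}. Since the right-hand side of \eqref{bb2} is pointwise nonnegative, $t\mapsto\int\rho\mathcal S\,dx$ is nondecreasing, so integrating in time over $[0,T]$ gives the identity
\begin{equation*}
\iint_{Q_T}\Big(\frac{u_x^2+|\w_x|^2+|\bb_x|^2}{\theta}+\frac{\kappa(\theta)\theta_x^2}{\theta^2}\Big)\,dx\,dt=\int\rho\mathcal S(\cdot,T)\,dx-\int\rho\mathcal S(\cdot,0)\,dx.
\end{equation*}
It therefore suffices to control $\int\rho\mathcal S(\cdot,T)\,dx=\int(\rho\ln\theta-\rho\ln\rho)\,dx$ from above and below. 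For the upper bound I use the elementary inequalities $\ln\theta\le\theta$ and $-\rho\ln\rho\le 1/e$ (the latter holding pointwise for $\rho\ge0$), which together with $\int\rho\theta\,dx\le E$ give $\int\rho\mathcal S(\cdot,T)\,dx\le E+1/e$; this bounds the dissipation integral once the initial entropy $\int\rho_0\mathcal S(\cdot,0)\,dx$ is seen to be finite and $\delta$-independent (for the regularized data $\rho_{0\delta}=\rho_0+\delta$ this is exactly where I would check uniformity in $\delta$).

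The remaining, and most delicate, point is to convert this into the stated $L^1$ bounds on $\rho\ln\rho$ and $\rho|\ln\theta|$. Using monotonicity in the reverse direction, $\int\rho\mathcal S(\cdot,T)\,dx\ge\int\rho_0\mathcal S(\cdot,0)\,dx$, together with $\int\rho\ln\theta\,dx\le\int\rho\theta\,dx\le E$, I get the upper bound $\int\rho\ln\rho\,dx\le E-\int\rho_0\mathcal S(\cdot,0)\,dx$, while $-\rho\ln\rho\le 1/e$ gives the matching lower bound, settling the $\rho\ln\rho$ term. For $\rho|\ln\theta|$ I would split $\Omega$ into $\{\theta\ge1\}$ and $\{\theta<1\}$: on the first set $\rho|\ln\theta|=\rho\ln\theta\le\rho\theta$ is controlled by energy, and on the second set the identity $\int_{\{\theta<1\}}\rho(-\ln\theta)\,dx=\int_{\{\theta\ge1\}}\rho\ln\theta\,dx-\int\rho\ln\theta\,dx$ is bounded above once I have the lower bound $\int\rho\ln\theta\,dx\ge\int\rho_0\mathcal S(\cdot,0)\,dx-1/e$, which follows from the entropy inequality and the $\rho\ln\rho$ bound just obtained. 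I expect the bookkeeping of these low-temperature contributions, and the verification that the initial entropy is bounded uniformly in $\delta$, to be the only genuine obstacle; everything else reduces to the boundary-term cancellations and the two convexity-type inequalities.
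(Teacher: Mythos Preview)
Your proposal is correct and follows the same approach as the paper: integrate \eqref{aa1}, \eqref{bb1}, and \eqref{bb2} over $Q_T$ and use the boundary conditions to kill the flux terms. The paper's own proof is a single sentence to this effect, so you have simply filled in the elementary bookkeeping (the convexity inequalities $\ln\theta\le\theta$, $-\rho\ln\rho\le 1/e$, and the splitting of $\rho|\ln\theta|$ over $\{\theta\gtrless1\}$) that the paper leaves implicit; your flagged concern about the $\delta$-uniform finiteness of $\int\rho_{0\delta}\mathcal S(\cdot,0)\,dx$ is legitimate but is likewise not addressed in the paper.
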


\begin{Lemma}\label{lebb2}
\begin{equation}
0\leq \rho(x,t)\leq C, \ \ (x,t)\in \overline{Q}_T.\label{bb3}
\end{equation}
\end{Lemma}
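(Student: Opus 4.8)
The lower bound is immediate: for the approximate problem the density solves the transport equation \eqref{aa1} driven by a smooth velocity field, with strictly positive initial datum $\rho_{0\delta}=\rho_0+\delta$, so by the method of characteristics $\rho$ stays positive and in particular $\rho\ge 0$ uniformly in $\delta$. The real content of Lemma \ref{lebb2} is the \emph{uniform} upper bound, and the plan is to derive it by tracking $\log\rho$ along particle paths and controlling the effective viscous flux.

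Set $F:=u_x-P-\tfrac12|\bb|^2=u_x-\rho\theta-\tfrac12|\bb|^2$. Using \eqref{aa2} (coefficients normalized to $1$) together with \eqref{aa1} one checks the flux identity $F_x=(\rho u)_t+(\rho u^2)_x=\rho(u_t+uu_x)$. Along a characteristic $X(t)$ solving $\dot X=u(X,t)$, the continuity equation gives the ODE $\tfrac{D}{Dt}\log\rho=-u_x=-F-\rho\theta-\tfrac12|\bb|^2$, where $\tfrac{D}{Dt}=\partial_t+u\partial_x$. The difficulty is that $F$, hence $u_x$, is not controlled pointwise; the whole point is to extract from $F$ a piece that is an exact time derivative of a \emph{bounded} quantity, plus a remainder of favorable sign.

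To do this I would integrate the flux identity in $x$. Since $u$ vanishes on $\partial\Omega$, integrating $F_x$ over $(0,x)$ and then averaging over $(0,1)$ eliminates the unknown boundary value $F(0,t)$ and yields a representation $F=G_1+\rho u^2+\partial_t\Theta$, where $\Theta:=\int_0^x\rho u\,dy-\int_0^1\!\int_0^z\rho u\,dy\,dz$ and $G_1:=-\int(\rho\theta+\tfrac12|\bb|^2)\,dx-\int\rho u^2\,dx$; here one uses $\int_0^1 F\,dx=-\int(\rho\theta+\tfrac12|\bb|^2)\,dx$. By Lemma \ref{lebb1} (conservation of mass and energy) and Cauchy--Schwarz, both $\Theta$ and $G_1$ are bounded uniformly in $\delta$ and $t$. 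Substituting this into the characteristic ODE and rewriting $\partial_t\Theta=\tfrac{D}{Dt}\Theta-\rho u^2$ (since $\Theta_x=\rho u$), the two $\rho u^2$ terms cancel and leave
\[
\frac{D}{Dt}\big(\log\rho+\Theta\big)=-G_1-\rho\theta-\tfrac12|\bb|^2\le -G_1\le C,
\]
because the pointwise terms $-\rho\theta$ and $-\tfrac12|\bb|^2$ carry the dissipative (nonpositive) sign. Integrating in $t$ along each characteristic and using $|\Theta|\le C$ together with $\rho_{0\delta}\le\|\rho_0\|_{L^\infty}+1$ then gives $\log\rho(X(t),t)\le C(1+T)$, i.e.\ the uniform bound \eqref{bb3}, since every point is reached by a unique backward characteristic.

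The main obstacle is precisely the manipulation of $F$: one has no pointwise bound on $u_x$, so the boundary value $F(0,t)$ must be removed (this is what forces the averaging step and the use of the no-slip condition $u|_{\partial\Omega}=0$), and the representation must be arranged so that the only uncontrolled pointwise quantity $\rho u^2$ cancels while the surviving driving term $-\rho\theta-\tfrac12|\bb|^2$ keeps the correct sign. I expect the magnetic field to be benign for this particular estimate, since $\tfrac12|\bb|^2$ enters only through the pressure-like flux and the conserved energy, both with favorable sign; the genuine magnetic difficulties the authors flag should appear later, in the higher-order estimates of Lemmas \ref{lebb5}--\ref{lebb7}.
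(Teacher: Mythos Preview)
Your argument is correct and is essentially the same Kazhikhov-type estimate the paper gives: both construct a bounded potential with $x$-derivative $\rho u$, rewrite the characteristic ODE for $\log\rho$ in terms of it, and use the nonpositive sign of $-\rho\theta-\tfrac12|\bb|^2$. The only cosmetic difference is the packaging---the paper defines its potential $\phi$ by a time integral (so that $\phi_t=u_x-\rho u^2-P-\tfrac12|\bb|^2$) and checks the single multiplicative identity $D_t(\rho e^{\phi})=-(P+\tfrac12|\bb|^2)\rho e^{\phi}\le 0$, which is exactly your inequality $\tfrac{D}{Dt}(\log\rho+\Theta)\le C$ after noting that your $\Theta$ and the paper's $\phi$ differ only by a bounded function of $t$.
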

\begin{proof}
We need only to estimate the upper bound.   The proof is similar to that in \cite{FJNa}, here we present it for completeness.
  From \eqref{aa1} and \eqref{aa2}, we have
\begin{align*}
  (\rho u)_t=\tilde{P}_x, \quad \tilde{P}_x:= \lambda u_x-\rho u^2-P-\frac12 |\bb|^2.
\end{align*}
Denote
\begin{align*}
   \phi:=\int^t_0 \tilde{P}(x,\tau)d\tau+\int^x_0\rho_0(\xi)u_0(\xi)d\xi,
\end{align*}
we have
\begin{align}
   \label{fjnaa}
   \phi_x=\rho u, \quad \phi_t=\tilde{P}, \quad \phi_x|_{\partial \Omega}=0,
   \quad \phi|_{t=0}=\int^x_0\rho_0(\xi)u_0(\xi)d\xi.
\end{align}
By virtue of Lemma \ref{lebb1} and   Cauchy inequality, it holds
\begin{align*}
   \|\phi_x\|_{L^\infty(0,T;L^1(\Omega))}\leq C, \quad \Big|\int \phi dx\Big| \leq C.
\end{align*}
Hence
\begin{align}\label{fjnab}
  \|\phi\|_{L^\infty(0,T;L^\infty(\Omega))}\leq C.
\end{align}

Now, denoting $F:=e^\phi$ and using \eqref{fjnaa}, we have after a straightforward calculation that
\begin{align*}
  D_t(\rho F):=\partial_t(\rho F)+u \partial_x(\rho F)=-\Big(P+\frac12|\bb|^2\Big)\rho F\leq 0,
\end{align*}
which together with \eqref{fjnab} implies \eqref{bb3} immediately.
\end{proof}

\begin{Lemma}\label{lebb3}
Let $0< \alpha<\min\{1, q\}$ be any given
 constant, then it holds
\begin{align}
&\iint_{Q_T}\left(\frac{\lambda u_x^2+\mu |\w_x|^2+\nu |\bb_x|^2}{\theta^\alpha}+
\frac{(1+\theta^q)\theta_x^2}{\theta^{1+\alpha}}\right)dxdt\leq
C,\label{bb4}\\
&\int_0^T\|\theta\|_{L^\infty}^{q-\alpha+1}dt\leq C.\label{bb5}
\end{align}
\end{Lemma}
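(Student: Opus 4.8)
The plan is to test the (normalized) internal energy equation \eqref{aa5} against the weight $\theta^{-\alpha}$. Using the continuity equation \eqref{aa1} to put the left-hand side in convective form, \eqref{aa5} reads
\begin{equation*}
\rho(\theta_t+u\theta_x)=(\kappa(\theta)\theta_x)_x+u_x^2+|\w_x|^2+|\bb_x|^2-\rho\theta u_x.
\end{equation*}
Multiplying by $\theta^{-\alpha}$ and integrating over $\Omega$, the transport term becomes a total time derivative, $\int\rho\theta^{-\alpha}(\theta_t+u\theta_x)\,dx=\tfrac{1}{1-\alpha}\tfrac{d}{dt}\int\rho\theta^{1-\alpha}\,dx$ (this uses $0<\alpha<1$), while integrating the conduction term by parts and invoking $\theta_x|_{\partial\Omega}=0$ produces the good term $\alpha\int\kappa(\theta)\theta^{-1-\alpha}\theta_x^2\,dx$. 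Integrating in time over $[0,T]$ then yields the identity
\begin{equation*}
\alpha\iint_{Q_T}\frac{\kappa(\theta)\theta_x^2}{\theta^{1+\alpha}}+\iint_{Q_T}\frac{u_x^2+|\w_x|^2+|\bb_x|^2}{\theta^{\alpha}}=\frac{1}{1-\alpha}\Big[\int\rho\theta^{1-\alpha}\,dx\Big]_0^T+\iint_{Q_T}\rho\theta^{1-\alpha}u_x.
\end{equation*}

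Next I would dispose of the two easy contributions. By the two-sided bound \eqref{aa8} on $\kappa$, the first term on the left dominates $c\iint_{Q_T}(1+\theta^q)\theta^{-1-\alpha}\theta_x^2$, which is precisely the conduction quantity appearing in \eqref{bb4}. The time-boundary term is controlled by H\"older's inequality together with the mass and energy bounds of Lemma \ref{lebb1}: since $0<1-\alpha<1$, one has $\int\rho\theta^{1-\alpha}\,dx\le(\int\rho\,dx)^{\alpha}(\int\rho\theta\,dx)^{1-\alpha}\le C$, and the initial contribution is finite by hypothesis. The genuine difficulty is the coupling term $\iint_{Q_T}\rho\theta^{1-\alpha}u_x$, which mixes the temperature and the velocity dissipation and is not controlled by the energy identity alone; this is the step I expect to be the main obstacle.

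To handle it I would apply Young's inequality, $\rho\theta^{1-\alpha}u_x\le\tfrac12\theta^{-\alpha}u_x^2+\tfrac12\rho^2\theta^{2-\alpha}$, so the first piece is absorbed into $\iint_{Q_T}\theta^{-\alpha}u_x^2$ on the left. For the remainder I use the uniform bound $\rho\le C$ of Lemma \ref{lebb2} to write $\rho^2\theta^{2-\alpha}\le C\rho\theta\cdot\theta^{1-\alpha}$, whence the energy bound gives $\iint_{Q_T}\rho^2\theta^{2-\alpha}\le C\int_0^T\|\theta\|_{L^\infty}^{1-\alpha}\,dt$. Thus \eqref{bb4} is reduced to controlling $\int_0^T\|\theta\|_{L^\infty}^{1-\alpha}$, and this is where \eqref{bb5} enters. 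Applying Lemma \ref{pona}(ii) with $v=\theta$ and $r=(q-\alpha+1)/2$ (the hypothesis $\|\rho\theta\|_{L^1}\le C$ holding by energy conservation) and squaring gives $\|\theta\|_{L^\infty}^{q-\alpha+1}\le C\|(\theta^r)_x\|_{L^2}^2+C$; since $|(\theta^r)_x|^2=r^2\theta^{q-1-\alpha}\theta_x^2\le r^2(1+\theta^q)\theta^{-1-\alpha}\theta_x^2$, integrating in time bounds $\int_0^T\|\theta\|_{L^\infty}^{q-\alpha+1}$ by the left side of \eqref{bb4} plus a constant. Finally, because $q>0$ forces the strict gap $1-\alpha<q-\alpha+1$, Young's inequality yields $\int_0^T\|\theta\|_{L^\infty}^{1-\alpha}\le\varepsilon\int_0^T\|\theta\|_{L^\infty}^{q-\alpha+1}+C_\varepsilon$. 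Feeding this back, all occurrences of $\int_0^T\|\theta\|_{L^\infty}^{q-\alpha+1}$ and of the dissipation on the right are absorbed into the left upon choosing $\varepsilon$ small, simultaneously closing \eqref{bb4} and \eqref{bb5}. The delicate point is exactly this absorption: it is the relaxed exponent condition $q>0$ (equivalently, the availability of the strict exponent gap) together with the uniform density bound that renders the otherwise dangerous coupling term harmless.
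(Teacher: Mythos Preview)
Your proof is correct and follows the same strategy as the paper: multiply \eqref{aa5} by $\theta^{-\alpha}$, use Young's inequality on the pressure--velocity coupling, and reduce matters to controlling $\int_0^T\|\theta\|_{L^\infty}^{1-\alpha}\,dt$ via Lemma~\ref{pona}. The one substantive difference is in how Lemma~\ref{pona} is invoked: the paper applies it with exponent $r=1-\alpha$, which forces a case split according to whether $q\ge 1-\alpha$ or $q<1-\alpha$ in order to relate $\int\theta^{-2\alpha}\theta_x^2$ to the conduction dissipation; you instead choose $r=(q-\alpha+1)/2$ so that $|(\theta^r)_x|^2=r^2\theta^{q-1-\alpha}\theta_x^2$ matches the $\theta^q$ portion of the dissipation directly, and then a single Young inequality (using $1-\alpha<q-\alpha+1$) closes the loop. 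Your route is slightly more economical and makes the role of the hypothesis $q>0$ a bit more transparent.
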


\begin{proof}
The proof is similar to that given in \cite{WZb} for symmetric Navier-Stokes equations, we present it here for completeness.
Multiplying \eqref{aa5} by $\theta^{-\alpha}$ and integrating the result over $Q_T$, we have
\begin{align}\label{bb55}
&\iint_{Q_T}\frac{\lambda u_x^2+\mu |\w_x|^2+\nu |\bb_x|^2}{\theta^\alpha}dxdt+
\alpha\iint_{Q_T}\frac{\kappa(\theta)\theta_x^2}{\theta^{1+\alpha}}dxdt\nonumber\\
&\qquad\qquad\qquad=\iint_{Q_T}\{(\rho\theta)_t+(\rho u \theta)_x+Pu_x\}\theta^{-\alpha} dxdt.
\end{align}
Using \eqref{aa1}, Lemma \ref{lebb1}, and Young's inequality, the first two terms on the right-hand side of \eqref{bb55} can be bounded as
\begin{align}\label{bb56}
  \iint_{Q_T}\{(\rho\theta)_t+(\rho u \theta)_x\}\theta^{-\alpha} dxdt
 =&   \iint_{Q_T}(\rho\theta_t\theta^{-\alpha}+\theta_x\rho u\theta^{-\alpha})dxdt\nonumber\\
 =& \frac{1}{1-\alpha}\iint_{Q_T}[(\rho\theta^{1-\alpha})_t+(\rho u \theta^{1-\alpha})_x]dxdt\nonumber\\
 =&  \frac{1}{1-\alpha}\left[\int \rho\theta^{1-\alpha}dx-\int \rho_0\theta^{1-\alpha}_0dx\right]\nonumber\\
 \leq & \frac{1}{1-\alpha}\int \rho\theta^{1-\alpha}dx\nonumber\\
 \leq & \frac{C}{1-\alpha}\left[\int \rho\theta dx+\int \rho dx\right]\nonumber\\
 \leq & C.
\end{align}
By Cauchy inequality, Lemmas  \ref{lebb1} and \ref{lebb2}, we have
\begin{align}
\iint_{Q_T}  Pu_x \theta^{-\alpha} dxdt \leq & \frac{\lambda}{2}\iint_{Q_T}\frac{u_x^2}{\theta^\alpha}dxdt
+C\iint_{Q_T} \rho^2\theta^{2-\alpha}dxdt\nonumber\\
\leq &  \frac{\lambda}{2}\iint_{Q_T}\frac{u_x^2}{\theta^\alpha}dxdt
+C \int^T_0\|\theta\|_{L^\infty}^{1-\alpha}dt.
\end{align}
Noticing $0<\alpha<\min\{1,q\}$,  the H\"{o}lder inequality and Lemma \ref{pona} imply that
\begin{align}\label{bb57}
 C \int^T_0\|\theta\|_{L^\infty}^{1-\alpha}dt\leq & C+\int^T_0 \|\theta^{-\alpha}\theta_x\|_{L^2}dt \nonumber\\
 \leq &  C +C \int^T_0\left(\int \frac{\theta^2_x\theta^{1-\alpha}}{\theta^{1+\alpha}}dx\right)^{1/2}dt\nonumber\\
 \leq & C +\frac{\alpha}{2} \iint_{Q_T}\frac{\kappa(\theta)\theta_x^2}{\theta^{1+\alpha}}dxdt
\end{align}
for $ q\geq 1-\alpha$, while
\begin{align}\label{bb58}
 C \int^T_0\|\theta\|_{L^\infty}^{1-\alpha}dt\leq & C+\int^T_0 \|\theta^{-\alpha}\theta_x\|_{L^2}dt \nonumber\\
 \leq  &  C +C \int^T_0\left(\int \frac{\theta^q\theta^2_x}{\theta^{1+\alpha}} \theta^{1-\alpha-q} dx\right)^{1/2}dt\nonumber\\
 \leq & C +\frac{\alpha}{2} \iint_{Q_T}\frac{\kappa(\theta)\theta_x^2}{\theta^{1+\alpha}}dxdt
+\tilde{C} \int^T_0\|\theta\|_{L^\infty}^{1-\alpha-q}dt\nonumber\\
\leq &  C +\frac{\alpha}{2} \iint_{Q_T}\frac{\kappa(\theta)\theta_x^2}{\theta^{1+\alpha}}dxdt
+\frac{C}{2} \int^T_0\|\theta\|_{L^\infty}^{1-\alpha}dt
\end{align}
for $ 0<q<1-\alpha$.

Putting \eqref{bb56}-\eqref{bb58} into \eqref{bb55} implies \eqref{bb4}.  By \eqref{bb57}, \eqref{bb58}, and \eqref{bb4},
we can easily get \eqref{bb5}.
\end{proof}

\begin{Lemma}\label{lebb4}
\begin{equation}
\iint_{Q_T}(\lambda u_x^2+\mu |\w_x|^2+\nu |\bb_x|^2)dx dt\leq C.\label{bb6}
\end{equation}
\end{Lemma}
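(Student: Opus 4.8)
The plan is to obtain \eqref{bb6} from a mechanical (kinetic plus magnetic) energy balance rather than from the weighted bounds of Lemma \ref{lebb3} directly. First I would multiply \eqref{aa2} by $u$, \eqref{aa3} by $\w$, and \eqref{aa4} by $\bb$, integrate over $\Omega$, and sum. Using \eqref{aa1} to recast the transport terms and integrating by parts with the boundary conditions \eqref{aa6}, the three viscous/diffusive terms produce exactly $\int (u_x^2+|\w_x|^2+|\bb_x|^2)\,dx$. The delicate point is the bookkeeping of the magnetic coupling: the advective contribution $\tfrac12\int|\bb|^2u_x\,dx$ coming from the Lorentz term in \eqref{aa2} cancels the term $-\tfrac12\int|\bb|^2u_x\,dx$ arising from $(u\bb)_x$ in \eqref{aa4}, while the induction and Lorentz cross terms combine into $\int(\w\cdot\bb)_x\,dx$, which vanishes by \eqref{aa6}. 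What survives on the right is only the pressure work, so I expect the clean identity
\begin{equation*}
\frac{d}{dt}\int\Big(\tfrac12\rho u^2+\tfrac12\rho|\w|^2+\tfrac12|\bb|^2\Big)\,dx+\int\big(u_x^2+|\w_x|^2+|\bb_x|^2\big)\,dx=\int\rho\theta u_x\,dx.
\end{equation*}

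Integrating in time, discarding the nonnegative endpoint energy at $t=T$ and bounding the datum at $t=0$ through Lemma \ref{lebb1}, I would arrive at
\begin{equation*}
\iint_{Q_T}\big(u_x^2+|\w_x|^2+|\bb_x|^2\big)\,dx\,dt\leq C+\iint_{Q_T}\rho\theta u_x\,dx\,dt.
\end{equation*}
A Young inequality splits the right-hand side as $\tfrac12\iint_{Q_T}u_x^2+\tfrac12\iint_{Q_T}\rho^2\theta^2$, and the first piece is absorbed into the left. It then remains only to control $\iint_{Q_T}\rho^2\theta^2$.

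For this term I would invoke the uniform density bound of Lemma \ref{lebb2}, writing $\rho^2\theta^2\leq C\,(\rho\theta)\,\theta$, so that at each time the energy bound $\int\rho\theta\,dx\leq C$ from Lemma \ref{lebb1} gives $\int\rho^2\theta^2\,dx\leq C\|\theta\|_{L^\infty}$. Integrating in time reduces the matter to showing $\int_0^T\|\theta\|_{L^\infty}\,dt<\infty$; since $q-\alpha+1>1$, Hölder's inequality in time together with \eqref{bb5} yields $\int_0^T\|\theta\|_{L^\infty}\,dt\leq C$, which closes \eqref{bb6}.

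I expect the main obstacle to be the first step: correctly organizing the magnetic coupling so that the Lorentz and induction cross terms cancel and the only forcing left is the pressure work $\int\rho\theta u_x$. This is precisely the flow--magnetic interaction flagged in the introduction and is the reason the estimate is not a routine Navier--Stokes computation. Everything downstream is then Young's inequality plus the temperature integrability already secured in Lemma \ref{lebb3}, so the whole estimate hinges on the single integrability statement $\int_0^T\|\theta\|_{L^\infty}\,dt\leq C$ furnished by \eqref{bb5}.
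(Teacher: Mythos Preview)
Your proposal is correct and follows essentially the same route as the paper: the paper also tests \eqref{aa2}, \eqref{aa3}, \eqref{aa4} against $u$, $\w$, $\bb$, obtains the identical identity \eqref{bb91} with only $\int Pu_x$ surviving on the right after the magnetic cross terms cancel, and then controls $\iint_{Q_T}\rho^2\theta^2$ by $C\int_0^T\|\theta\|_{L^\infty}\,dt$ via Lemmas \ref{lebb1}--\ref{lebb2} and closes with \eqref{bb5}. The only cosmetic difference is that the paper bounds $\|\theta\|_{L^\infty}$ pointwise by $C(1+\|\theta\|_{L^\infty}^{q-\alpha+1})$ instead of using H\"older in time, which amounts to the same thing.
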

\begin{proof}
  Multiplying (\ref{aa2}) by $u$, using (\ref{aa1}), and integrating the result over
$\Omega$,  we see that
\begin{equation}
\frac{1}{2}\frac{d}{dt}\int \rho u^2 dx+\int \lambda u_x^2~dx=\int P u_x\,dx-\int
u \bb\cdot\bb_x dx.\label{bb7}
\end{equation}
 Similar, multiplying  (\ref{aa3}) by $\w$, using (\ref{aa1}), and integrating the result over
$\Omega$, we obtain
\begin{equation}
\frac{1}{2}\frac{d}{dt}\int\rho |\w|^2dx+\int \mu |\w_x|^2dx=\int \bb_x
\cdot \w dx=-\int \bb\cdot \w_x dx.\label{bb8}
\end{equation}
Multiplying (\ref{aa4}) by $\bb$ and then integrating them    over
$\Omega$,   we infer that
\begin{equation}
\frac{1}{2}\frac{d}{dt}\int |\bb|^2dx+\int \nu |\bb_x|^2dx=\int \bb \cdot\w_x
dx+\int u \bb\cdot\bb_x dx.\label{bb9}
\end{equation}

Summing up (\ref{bb7}), (\ref{bb8}) and (\ref{bb9}), using
(\ref{bb3}) and (\ref{bb5}), we get
\begin{align}\label{bb91}
&\frac{1}{2}\frac{d}{dt}\int(\rho u^2+\rho
|\w|^2+|\bb|^2)dx+\int(\lambda u_x^2+\mu |\w_x|^2+\nu |\bb_x|^2)dx=\int P u_x\,dx.
\end{align}
By Young inequality, Lemmas \ref{lebb1} and \ref{lebb2}, the estimate \eqref{bb5}, and the condition that $0<\alpha <\min\{1,q\}$,
we have
 \begin{align*}
\iint_{Q_T} P u_x\,dxdt
 \leq&C \iint_{Q_T} \rho^2 \theta^2dxdt +\frac12\int^T_0 \|u_x\|_{L^2}^2dt \nonumber\\
\leq &
C\int^T_0\|\theta\|_{L^\infty}dt +\frac12\int^T_0 \|u_x\|_{L^2}^2dt\nonumber\\
\leq &C+ C\int^T_0\|\theta\|^{q-\alpha+1}_{L^\infty}dt +\frac12\int^T_0 \|u_x\|_{L^2}^2dt\nonumber\\
\leq & C+  \frac12\int^T_0 \|u_x\|_{L^2}^2dt.
\end{align*}
Integrating \eqref{bb91} over $[0,T]$ and applying the above inequality give
  (\ref{bb6}).
\end{proof}

\begin{Lemma}\label{lebb5}
\begin{align}
&\int(\lambda u_x^2+\mu |\w_x|^2+\nu |\bb_x|^2+P^2+\rho\theta^{q+2})dx\nonumber\\
 &+\iint_{Q_T}(\rho u_t^2+\rho
|\w_t|^2+|\bb_t|^2+\nu |\bb_{xx}|^2+\kappa^2(\theta)\theta_x^2)dx dt\leq
C.\label{bb10}
\end{align}
\end{Lemma}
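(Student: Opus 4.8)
The plan is to derive a single differential inequality of Gronwall type for
\[
E(t):=\int\big(u_x^2+|\w_x|^2+|\bb_x|^2\big)\,dx+\int\rho\theta^{q+2}\,dx
\]
together with the dissipative integrals, and close it using the integrated bound $\int_0^T\|\theta\|_{L^\infty}^{q-\alpha+1}\,dt\le C$ of Lemma \ref{lebb3} and the $L^\infty$ estimates from Lemma \ref{pona}. Note that $\int P^2$ need not be tracked on its own: since $0\le\rho\le C$ by Lemma \ref{lebb2}, one has $P^2=\rho^2\theta^2\le C\rho\theta^2$, and by H\"older with the mass bound of Lemma \ref{lebb1},
\[
\int\rho\theta^2\,dx\le\Big(\int\rho\,dx\Big)^{\frac{q}{q+2}}\Big(\int\rho\theta^{q+2}\,dx\Big)^{\frac{2}{q+2}}\le C\Big(1+\int\rho\theta^{q+2}\,dx\Big),
\]
so the bound on $\int P^2$ follows once $\int\rho\theta^{q+2}$ is controlled.

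First I would rewrite \eqref{aa2}--\eqref{aa4} in non-conservative form with the help of \eqref{aa1},
\begin{align*}
&\rho u_t+\rho uu_x=u_{xx}-\Big(P+\tfrac12|\bb|^2\Big)_x,\\
&\rho\w_t+\rho u\w_x=\w_{xx}+\bb_x,\\
&\bb_t+u_x\bb+u\bb_x-\w_x=\bb_{xx},
\end{align*}
and test them by $u_t$, $\w_t$ and $\bb_t$, respectively. Using \eqref{aa6} to kill boundary terms in $\int u_{xx}u_t=-\tfrac12\frac{d}{dt}\int u_x^2$ and its analogues, this produces $\frac{d}{dt}$ of $\tfrac12\int(u_x^2+|\w_x|^2+|\bb_x|^2)$ and the dissipation $\int(\rho u_t^2+\rho|\w_t|^2+|\bb_t|^2)$, plus convective terms like $-\int\rho uu_xu_t$, the pressure/magnetic-pressure term $-\int(P+\tfrac12|\bb|^2)_xu_t$, and the transverse coupling $\int\bb_x\cdot\w_t$ and $\int\w_x\cdot\bb_t-\int(u\bb)_x\cdot\bb_t$. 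The convective terms are controlled by $\|u\|_{L^\infty}\|\sqrt\rho u_x\|_{L^2}\|\sqrt\rho u_t\|_{L^2}$ and Young's inequality, where $\|u\|_{L^\infty},\|\w\|_{L^\infty},\|\bb\|_{L^\infty}\le C(1+\|(u_x,\w_x,\bb_x)\|_{L^2})$ by Lemma \ref{pona} and the conserved energy of Lemma \ref{lebb1}.

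The delicate point, and the place where the magnetic field forces extra care, is the term $\int\bb_x\cdot\w_t$: because of the vacuum only $\sqrt\rho\,\w_t$ is controlled in $L^2$, so a bare $\w_t$ cannot be paired directly. I would instead integrate by parts in time and space,
\[
\int\bb_x\cdot\w_t\,dx=\frac{d}{dt}\int\bb_x\cdot\w\,dx+\int\bb_t\cdot\w_x\,dx,
\]
which trades the uncontrolled $\w_t$ for the \emph{controlled} $\bb_t$ (the magnetic equation carries no density weight, so $\|\bb_t\|_{L^2}$ is part of the dissipation) at the cost of a total time derivative bounded by $E(t)$. The pressure/magnetic-pressure term is handled by transferring the time derivative,
\[
-\int\Big(P+\tfrac12|\bb|^2\Big)_xu_t\,dx=\frac{d}{dt}\int\Big(P+\tfrac12|\bb|^2\Big)u_x\,dx-\int\big(P_t+\bb\cdot\bb_t\big)u_x\,dx,
\]
then substituting $P_t=-(\rho u)_x\theta+\rho\theta_t$ and eliminating $\rho\theta_t$ through the internal-energy equation $\rho\theta_t+\rho u\theta_x-(\kappa\theta_x)_x=u_x^2+|\w_x|^2+|\bb_x|^2-\rho\theta u_x$; the top-order contributions are absorbed into the dissipation and the rest estimated by the previous lemmas.

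To generate the temperature quantities I would test the internal-energy equation by $\theta$ and by $\theta^{q+1}$. Using \eqref{aa1} the convective parts collapse to $\tfrac12\frac{d}{dt}\int\rho\theta^2$ and $\tfrac{1}{q+2}\frac{d}{dt}\int\rho\theta^{q+2}$, while the diffusion gives $\int\kappa\theta_x^2$ and $(q+1)\int\kappa\theta^q\theta_x^2$; adding these reconstructs $\int\kappa^2(\theta)\theta_x^2\sim\int(1+\theta^q)^2\theta_x^2$ up to constants, and the first identity supplies $\int\rho\theta^2$ and hence $\int P^2$. The bound on $\iint_{Q_T}|\bb_{xx}|^2$ is then immediate from the magnetic equation via $\|\bb_{xx}\|_{L^2}\le\|\bb_t\|_{L^2}+\|u_x\bb\|_{L^2}+\|u\bb_x\|_{L^2}+\|\w_x\|_{L^2}$ once the $L^\infty$ bounds and $\|\bb_t\|_{L^2(Q_T)}$ are available. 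Finally I sum all identities, choose the Young parameters so that each dissipative term on the right is absorbed, and reach
\[
\frac{d}{dt}E(t)+\text{(dissipation)}\le C\big(1+\|\theta\|_{L^\infty}^{q-\alpha+1}\big)E(t)+C,
\]
which Gronwall closes by Lemma \ref{lebb3}. I expect the \emph{main obstacle} to be arranging the nonlinear temperature--velocity couplings, typically $\int\rho\theta^{q+2}u_x$ and $\int(u_x^2+|\w_x|^2+|\bb_x|^2)\theta^{q+1}$, so that after Young's inequality the coefficient of $E(t)$ is dominated by the integrable power $\|\theta\|_{L^\infty}^{q-\alpha+1}$ rather than a higher, non-integrable power of $\|\theta\|_{L^\infty}$; this is precisely where the dissipation $\int\kappa\theta^q\theta_x^2$ and the relaxed assumption $q>0$ must be exploited, compounded by the magnetic coupling terms discussed above.
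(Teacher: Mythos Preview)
Your overall architecture is the same as the paper's: test \eqref{aa2}--\eqref{aa4} against $u_t,\w_t,\bb_t$, shift the time derivative in the pressure/magnetic-pressure term, trade $\int\bb_x\cdot\w_t$ for $\int\bb_t\cdot\w_x$, and test \eqref{aa5} by a power of $\theta$. The minor variants---you test \eqref{aa4} by $\bb_t$ and recover $\|\bb_{xx}\|_{L^2(Q_T)}$ afterwards, whereas the paper tests by $\bb_t-\bb_{xx}$ to get both simultaneously; you bound $\int P^2$ by H\"older from $\int\rho\theta^{q+2}$, whereas the paper extracts $\tfrac12\frac{d}{dt}\int(P+\tfrac12|\bb|^2)^2$ directly---are all harmless.

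There is, however, a genuine gap at the step you describe as routine. After writing
\[
-\int\Big(P+\tfrac12|\bb|^2\Big)_x u_t\,dx=\frac{d}{dt}\int\Big(P+\tfrac12|\bb|^2\Big)u_x\,dx-\int\big(P_t+\bb\cdot\bb_t\big)u_x\,dx
\]
and substituting your expressions for $P_t$ and $\rho\theta_t$, you inherit the term $-\int(\kappa\theta_x)_x u_x=\int\kappa\theta_x\,u_{xx}$. The only access to $u_{xx}$ is through \eqref{aa2}, namely $u_{xx}=\rho u_t+\rho uu_x+(P+\tfrac12|\bb|^2)_x$, and the last summand produces $\int\kappa\theta_x\,P_x=\int\kappa\theta_x(\rho_x\theta+\rho\theta_x)$. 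The factor $\rho_x$ is \emph{not} available here: its $L^2$-bound is Lemma~\ref{lebb6}, which relies on the present lemma. The paper avoids this circularity by the effective-viscous-flux decomposition
\[
u_x=\Big(u_x-P-\tfrac12|\bb|^2\Big)+\Big(P+\tfrac12|\bb|^2\Big).
\]
The second summand turns $-\int(P+\tfrac12|\bb|^2)_t(P+\tfrac12|\bb|^2)$ into the perfect derivative $-\tfrac12\frac{d}{dt}\int(P+\tfrac12|\bb|^2)^2$, while the first summand has $x$-derivative exactly $\rho u_t+\rho uu_x$ by \eqref{aa2}; hence the flux $\kappa\theta_x$ coming from \eqref{bb14} pairs only with $\rho u_t+\rho uu_x$ and never with $P_x$. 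This device is what your outline is missing; once it is inserted, the rest of your scheme goes through.

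A smaller remark on the closure: the paper does not arrive at a coefficient $\|\theta\|_{L^\infty}^{q-\alpha+1}$ in front of $E(t)$. It uses the pointwise bound $\|\theta\|_{L^\infty}^{q+1}\le C(1+\|\kappa\theta_x\|_{L^2})$ from Lemma~\ref{pona} to \emph{absorb} every factor $\theta^{q+1}$ (in particular the one in $\int(u_x^2+|\w_x|^2+|\bb_x|^2)\theta^{q+1}$) into the dissipation $\int\kappa^2\theta_x^2$; the Gronwall coefficient that remains is $C(1+\|u_x\|_{L^2}^2+\|\w_x\|_{L^2}^2+\|\bb_x\|_{L^2}^2)$, closed via Lemma~\ref{lebb4}. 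So your anticipated ``main obstacle'' is not where the real difficulty lies.
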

\begin{proof}
 Multiplying (\ref{aa2}) by $u_t$, and then integrating them    over
$\Omega$,   we infer that
\begin{align}
 &\frac{1}{2}\frac{d}{dt}\int u_x^2dx+\int\rho u_t^2dx+\int\rho
uu_xu_t dx\nonumber\\
 =& -\int\left(P+\frac{1}{2}|\bb|^2\right)_xu_t dx\nonumber\\
 =&\int\left(P+\frac{1}{2}|\bb|^2\right)u_{xt}dx\nonumber\\
 =& \frac{d}{dt}
\int\left(P+\frac{1}{2}|\bb|^2\right)u_xdx-\int\left(P+\frac{1}{2}|\bb|^2\right)_t
u_x dx\nonumber\\
 =&\frac{d}{dt}\int\left(P+\frac{1}{2}|\bb|^2\right)u_x
dx-\int\left(P+\frac{1}{2}|\bb|^2\right)_t\left(u_x-P-\frac{1}{2}|\bb|^2\right)dx\nonumber\\
&-\frac{1}{2}\frac{d}{dt}\int\left(P+\frac{1}{2}|\bb|^2\right)^2dx.\label{bb11}
\end{align}

First, by Cauchy inequality and Poincar\'{e} inequality, it is easy to find that
\begin{align}
\left|\int\rho uu_xu_t dx\right| \leq&\frac{1}{16}\int\rho
u_t^2dx+C\int\rho u^2u_x^2dx\nonumber\\
 \leq&\frac{1}{16}\int\rho
u_t^2dx+C\|\rho\|_{L^\infty}\|u\|^2_{L^\infty}\int u_x^2dx\nonumber\\
 \leq&C+\frac{1}{16}\int\rho u_t^2dx+C\left(\int
u_x^2dx\right)^2.\label{bb12}
\end{align}
From  (\ref{aa4}) and (\ref{aa5}), we have
\begin{align}
&\left(\frac{1}{2}|\bb|^2\right)_t+\bb\cdot(u
\bb-\w)_x=(\bb\cdot\bb_x)_x-\nu |\bb_x|^2,\label{bb13}\\
&P_t+(uP-\kappa(\theta)\theta_x)_x=\lambda u_x^2+\mu |\w_x|^2+\nu |\bb_x|^2-Pu_x.\label{bb14}
\end{align}
Using (\ref{bb13}), (\ref{bb14}), (\ref{aa2}),  (\ref{bb3}), and Lemma \ref{lebb2},  we
obtain
\begin{align}
&-\int\left(P+\frac{1}{2}|\bb|^2\right)_t\left(\lambda u_x-P-\frac{1}{2}|\bb|^2\right)dx\nonumber\\
 =&-\int\left[\lambda u_x^2+\mu |\w_x|^2-Pu_x-\bb\cdot\left(u
\bb-\w\right)_x\right]\left(\lambda u_x-P-\frac{1}{2}|\bb|^2\right)dx\nonumber\\
&+\int\left(\kappa(\theta)\theta_x-uP+\bb\cdot\bb_x\right)\left(\lambda u_x-P-\frac{1}{2}|\bb|^2\right)_x
dx\nonumber\\
 =&-\int\left[\lambda u_x^2+\mu |\w_x|^2-Pu_x-\bb\cdot(u
\bb-\w)_x\right]\left(\lambda u_x-P-\frac{1}{2}|\bb|^2\right)dx\nonumber\\
&+\int\left(\kappa(\theta)\theta_x-uP+\bb\cdot\bb_x\right)\left(\rho u_t+\rho uu_x\right)dx\nonumber\\
 \leq&\int\Big |\lambda u_x^2+\mu |\w_x|^2-Pu_x-\bb\cdot\left(u
\bb-\w\right)_x\Big|dx\left\|\lambda u_x-P-\frac{1}{2}|\bb|^2\right\|_{L^\infty}\nonumber\\
&+\left\|\kappa(\theta)\theta_x-uP+\bb\cdot\bb_x\right\|_{L^2}\left(\|\sqrt\rho
u_t\|_{L^2}\| \rho\|_{L^1}^{1/2}+\|\rho\|_{L^\infty}\|u\|_{L^\infty}\|u_x\|_{L^2}\right)\nonumber\\
 \leq& C\left(\left\|u_x-P-\frac{1}{2}|\bb|^2\right\|_{L^1} +\left\|\left(\lambda u_x-P-\frac{1}{2}|\bb|^2\right)_x\right\|_{L^1}\right)
 \nonumber\\
&\times  \int\left(\lambda u_x^2+\mu |\w_x|^2+P^2+|\bb|^2+|\bb|^4+u^2+|\bb_x|^2\right)dx\nonumber\\
&   +C\left\|\kappa(\theta)\theta_x-u
P+\bb\cdot\bb_x\right\|_{L^2}\left(\|\sqrt\rho u_t\|_{L^2}+C\|u_x\|_{L^2}^2\right)\nonumber\\
 \leq& C\int\left(\lambda u_x^2+\mu |\w_x|^2+P^2+|\bb_x|^2\right)dx\Big\{1+\|u_x\|_{L^2}+\|\rho u_t+\rho
uu_x\|_{L^1}\Big\}\nonumber\\
&+C\left\|\kappa(\theta)\theta_x-uP+\bb\cdot\bb_x\right\|_{L^2}\left(\|\sqrt\rho
u_t\|_{L^2}+C\|u_x\|_{L^2}^2\right)\nonumber\\
 \leq& C\Big\{1+\|u_x\|_{L^2}^4+\|\w_x\|_{L^2}^4+\|\bb_x\|_{L^2}^4+\|\kappa(\theta)\theta_x\|_{L^2}^2\Big\}
+\frac{1}{16}\left\|\sqrt\rho u_t\right\|_{L^2}^2.\label{bb15}
\end{align}
Multiplying (\ref{aa3}) by $\w_t$, using (\ref{bb3}), and integrating the result over
$\Omega$,  we derive
\begin{align}
&\frac{\mu}{2}\frac{d}{dt}\int |\w_x|^2dx+\int\rho |\w_t|^2dx\nonumber\\
= & \int
\bb_x\cdot\w_t
dx-\int\rho u \w_x\cdot\w_tdx\nonumber\\
 =&-\int \bb\cdot \w_{xt}dt-\int\rho u \w_x\cdot\w_tdx\nonumber\\
 =&-\frac{d}{dt}\int \bb\cdot  \w_x dx+\int \bb_t\cdot  \w_x dx-\int\rho u \w_x\cdot  \w_t
dx\nonumber\\
 \leq&-\frac{d}{dt}\int \bb\cdot  \w_x dx+\frac{1}{16}\int |\bb_t|^2dx+C\int
|\w_x|^2dx\nonumber\\
&+\frac{1}{16}\int\rho
|\w_t|^2dx+C\|u_x\|_{L^2}^4+C\|\w_x\|_{L^2}^4.\label{bb16}
\end{align}
Multiplying  (\ref{aa4}) by $\bb_t-\nu \bb_{xx}$,   integrating the result over
$\Omega$, and using Cauchy inequality,   we have
\begin{align}
&\frac{d}{dt}\int
|\bb_x|^2dx+\int(|\bb_t|^2+\nu|\bb_{xx}|^2)dx\nonumber\\
= & \int(\w-u
\bb)_x\cdot (\bb_t-\nu\bb_{xx})dx\nonumber\\
 \leq&C\int
|\w_x|^2dx+C\|u_x\|_{L^2}^4+C\|\bb_x\|_{L^2}^4+\frac{1}{16}\int(|\bb_t|^2+\nu|\bb_{xx}|^2)dx.\label{bb17}
\end{align}
Multiplying  (\ref{aa5}) by $\theta^{q+1}$, using (\ref{aa8}), and integrating the result over
$\Omega$,    we find
that
\begin{align}
&\frac{d}{dt}\int\rho\theta^{q+2}dx+C\int
\kappa^2\theta_x^2dx\nonumber\\
 \leq&C\int(\lambda u_x^2+\mu|\w_x|^2+\nu|\bb_x|^2+P^2)\theta^{q+1}dx\nonumber\\
 \leq&C\int(\lambda u_x^2+\mu|\w_x|^2+\nu|\bb_x|^2+P^2)dx\|\theta^{q+1}\|_{L^\infty}\nonumber\\
 \leq&C\int(\lambda u_x^2+\mu|\w_x|^2+\nu|\bb_x|^2+P^2)dx(\|\kappa\theta_x\|_{L^2}+1)\nonumber\\
 \leq&\frac{C}{16}\|\kappa\theta_x\|_{L^2}^2+C\left(\int(\lambda u_x^2+\mu|\w_x|^2+\nu|\bb_x|^2+P^2)dx\right)^2+C,\label{bb18}
\end{align}
where we used the estimate:
\begin{equation}
\|\theta\|_{L^\infty}^{q+1}\leq
C(1+\|\kappa(\theta)\theta_x\|_{L^2}) \label{bb19}
\end{equation}
which can be derived from  Lemma \ref{pona} and \eqref{aa8}.

Combining (\ref{bb11}), (\ref{bb12}), (\ref{bb15}), (\ref{bb16}), and
(\ref{bb17})  with (\ref{bb18}), we arrive at (\ref{bb10}).
\end{proof}

\begin{Lemma}\label{lebb6}
\begin{equation}
\int(\rho_x^2+\rho_t^2)dx+\iint_{Q_T}(u_{xx}^2+|\w_{xx}|^2)dx dt\leq C.\label{bb20}
\end{equation}
\end{Lemma}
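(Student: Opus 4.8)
The plan is to reduce everything to a single new quantity, $A(t):=\int\rho_x^2\,dx$, and then close a Gronwall inequality for it. Combining the momentum equation \eqref{aa2} with the continuity equation \eqref{aa1} gives the pointwise identity
\[
u_{xx}=\rho u_t+\rho u u_x+P_x+\bb\cdot\bb_x,\qquad P_x=\rho_x\theta+\rho\theta_x,
\]
and \eqref{aa3} similarly yields $\w_{xx}=\rho\w_t+\rho u\w_x-\bb_x$. Every factor on the right of these identities is already controlled by Lemmas \ref{lebb1}--\ref{lebb5}: we have $\|\rho\|_{L^\infty},\|u\|_{L^\infty},\|\bb\|_{L^\infty}\le C$ (the last two via Lemma \ref{pona}(i) and the uniform $H^1$-bound on $\bb$ from \eqref{bb10}), $\iint\rho u_t^2\le C$, $\iint\rho|\w_t|^2\le C$, $\iint(u_x^2+|\w_x|^2+|\bb_x|^2)\le C$ from \eqref{bb6}, and $\iint\theta_x^2\le C$ because $\kappa\ge C^{-1}$. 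Thus the only term forcing a coupling is $\rho_x\theta$ inside $P_x$, so the whole lemma hinges on bounding $A(t)$.

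To estimate $A$, I would differentiate \eqref{aa1} in $x$ to get $\rho_{xt}+2u_x\rho_x+u\rho_{xx}+\rho u_{xx}=0$, multiply by $\rho_x$, integrate over $\Omega$, and integrate $\int u\rho_x\rho_{xx}\,dx=-\tfrac12\int u_x\rho_x^2\,dx$ by parts (boundary terms vanish since $u|_{\partial\Omega}=0$). Substituting the expression for $u_{xx}$ produces
\[
\tfrac12\tfrac{d}{dt}A+\int\rho\theta\rho_x^2\,dx=-\tfrac32\int u_x\rho_x^2\,dx-\int\rho^2\rho_x u_t\,dx-\int\rho^2 u\rho_x u_x\,dx-\int\rho^2\rho_x\theta_x\,dx-\int\rho\rho_x\,\bb\cdot\bb_x\,dx,
\]
where the dissipative term $\int\rho\theta\rho_x^2\ge0$ can be discarded. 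The delicate term is the first one on the right, which requires $L^\infty$-control of $u_x$; I would obtain it through the effective viscous flux $G:=u_x-P-\tfrac12|\bb|^2$, which satisfies $G_x=\rho(u_t+uu_x)$, while $\int_\Omega G\,dx=-\int_\Omega(P+\tfrac12|\bb|^2)\,dx$ is bounded. The one-dimensional embedding $\|G-\int_\Omega G\|_{L^\infty}\le\|G_x\|_{L^1}$ then gives $\|u_x\|_{L^\infty}\le C(1+\|\rho(u_t+uu_x)\|_{L^2}+\|\theta\|_{L^\infty})$, a quantity that is $L^1$ in time since $\iint\rho(u_t+uu_x)^2\le C$ and $\int_0^T\|\theta\|_{L^\infty}\,dt\le C$ by \eqref{bb5} (using $q-\alpha+1>1$). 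The remaining four terms are routine Cauchy--Schwarz estimates, e.g. $|\int\rho^2\rho_x u_t|\le CA^{1/2}\|\sqrt\rho u_t\|_{L^2}$ and $|\int\rho^2\rho_x\theta_x|\le CA^{1/2}\|\theta_x\|_{L^2}$, each yielding either a source term integrable on $[0,T]$ (via $\int_0^T\|\sqrt\rho u_t\|_{L^2}^2\,dt\le C$ and $\iint\theta_x^2\le C$) or a contribution bounded by $CA$. Collecting everything gives $A'(t)\le\Psi(t)+\Phi(t)A(t)$ with $\int_0^T(\Psi+\Phi)\,dt\le C$, and Gronwall together with $A(0)=\int(\rho_0)_x^2\le C$ (uniform in $\delta$) yields $\sup_{[0,T]}A\le C$.

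With $\sup_t\int\rho_x^2\le C$ established, the remaining bounds follow by integrating the pointwise identities in time. For the first, $\|u_{xx}\|_{L^2}^2\le C\big(\|\rho u_t\|_{L^2}^2+\|\rho u u_x\|_{L^2}^2+\|\theta\|_{L^\infty}^2A+\|\rho\theta_x\|_{L^2}^2+\|\bb\|_{L^\infty}^2\|\bb_x\|_{L^2}^2\big)$, and $\iint u_{xx}^2\le C$ once we note $\int_0^T\|\theta\|_{L^\infty}^2\,dt\le C$; this last bound comes from squaring \eqref{bb19} to get $\|\theta\|_{L^\infty}^{2(q+1)}\le C(1+\|\kappa\theta_x\|_{L^2}^2)$, using $\iint\kappa^2\theta_x^2\le C$ from \eqref{bb10}, and applying H\"older in $t$ (valid since $q>0$). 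The bound $\iint|\w_{xx}|^2\le C$ is immediate from $\w_{xx}=\rho\w_t+\rho u\w_x-\bb_x$ and Lemma \ref{lebb5}, and finally $\rho_t=-(u\rho_x+\rho u_x)$ gives $\int\rho_t^2\le C(\|u\|_{L^\infty}^2A+\|\rho\|_{L^\infty}^2\|u_x\|_{L^2}^2)\le C$.

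The main obstacle is closing the Gronwall estimate for $A(t)$: the term $-\tfrac32\int u_x\rho_x^2$ needs $L^1(0,T;L^\infty)$-control of $u_x$, which is unavailable directly and is precisely what the effective-viscous-flux representation supplies, avoiding a circular dependence on $u_{xx}$. Equally essential is that every coefficient multiplying $A$ be integrable on $[0,T]$; this is exactly where the relaxed hypothesis $q>0$ enters, through \eqref{bb5} and \eqref{bb19}, and for small $q$ these temperature integrals are the binding constraint. The magnetic contributions $\int\rho\rho_x\,\bb\cdot\bb_x$ and the coupling terms in the $\w,\bb$ identities are the additional difficulty relative to the Navier--Stokes case, but they are absorbed using the uniform $H^1$-bound on $\bb$ from Lemma \ref{lebb5}.
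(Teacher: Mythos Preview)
Your argument is correct and is essentially the paper's proof, organized slightly differently. The paper also derives the energy identity for $\int\rho_x^2$ from the differentiated continuity equation, uses the effective viscous flux $G=u_x-P-\tfrac12|\bb|^2$ (with $G_x=\rho u_t+\rho u u_x$) to control the dangerous cubic term $\int u_x\rho_x^2$, closes by Gronwall after invoking Lemma~\ref{lebb5} and the temperature bound \eqref{bb19}, and finally reads off the $\rho_t$, $u_{xx}$, $\w_{xx}$ estimates from the equations. The only cosmetic difference is where the substitution happens: you plug the expression for $u_{xx}$ directly into $-\int\rho\rho_x u_{xx}$ and discard the nonnegative term $\int\rho\theta\rho_x^2$, whereas the paper keeps $-3\int u_x\rho_x^2$, splits $u_x=G+(P+\tfrac12|\bb|^2)$, discards the nonpositive part $-3\int(P+\tfrac12|\bb|^2)\rho_x^2$, and bounds the leftover $-2\int\rho\rho_x u_{xx}$ by Cauchy--Schwarz together with a separate estimate \eqref{bb22} for $\|u_{xx}\|_{L^2}$; both maneuvers exploit the same sign structure of the pressure.
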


\begin{proof}
Applying the operator $\partial_x$ to (\ref{aa1}) gives
$$\rho_{xt}+\rho_{xx}u+2\rho_xu_x+\rho u_{xx}=0.$$
Multiplying  the above
equation by $2\rho_x$,  integrating the result over
$\Omega$,   and  using (\ref{bb3}) and (\ref{bb10}), we find
that
\begin{align}
 \frac{d}{dt}\int\rho_x^2dx \nonumber
=& -3\int\rho_x^2u_xdx-2\int\rho\rho_xu_{xx}dx\nonumber\\
 =&-3\int\rho_x^2\left(\lambda u_x-P-\frac{1}{2}|\bb|^2\right)dx-3\int\rho_x^2
\left(p+\frac{1}{2}|\bb|^2\right)dx\nonumber\\
& -2\int\rho\rho_xu_{xx}dx\nonumber\\
 \leq&-3\int\rho_x^2\left(\lambda u_x-P-\frac{1}{2}|\bb|^2\right)dx\nonumber\\
 & -2\int\rho\rho_xu_{xx}dx\nonumber\\
 \leq&3\left\|\lambda u_x-P-\frac{1}{2}|\bb|^2\right\|_{L^\infty}\int\rho_x^2dx
+C\|\rho\|_{L^\infty}\|\rho_x\|_{L^2}\|u_{xx}\|_{L^2}\nonumber\\
 \leq&C\left(\left\|\lambda u_x-P-\frac{1}{2}|\bb|^2\right\|_{L^2}+
\left\|\left(\lambda u_x-P-\frac{1}{2}|\bb|^2\right)_x\right\|_{L^2}\right)
\int\rho_x^2dx\nonumber\\
&+C\|\rho_x\|_{L^2}\|u_{xx}\|_{L^2}\nonumber\\
 \leq&C(1+\|\sqrt\rho
u_t\|_{L^2})\|\rho_x\|_{L^2}^2+C\|u_{xx}\|_{L^2}^2.\label{bb21}
\end{align}
On the other hand, it follows from (\ref{aa2}) that
\begin{align}
\|u_{xx}\|_{L^2} \leq&C(\|\sqrt\rho u_t\|_{L^2}+\|\rho
uu_x\|_{L^2}+\|P_x\|_{L^2}+\|\bb\cdot \bb_x\|_{L^2})\nonumber\\
 \leq&C(1+\|\sqrt\rho
u_t\|_{L^2}+\|u_x\|_{L^2}^2+\|\theta_x\|_{L^2}+\|\rho_x\|_{L^2}\|\theta\|_{L^\infty})\nonumber\\
 \leq&C(1+\|\sqrt\rho
u_t\|_{L^2}+\|u_x\|_{L^2}^2+\|\kappa(\theta)\theta_x\|_{L^2}+\|\rho_x\|_{L^2}\|\theta\|_{L^\infty}).\label{bb22}
\end{align}
Inserting (\ref{bb22}) into (\ref{bb21}), using Lemmas \ref{bb3} and \ref{lebb5}, \eqref{bb19},
and the Gronwall inequality, we have
\begin{equation}
\|\rho_x\|_{L^\infty(0,T;L^2(\Omega))}\leq C.\label{bb23}
\end{equation}
It follows from (\ref{aa1}), (\ref{bb3}), (\ref{bb10}) and
(\ref{bb23}) that $$\|\rho_t\|_{L^\infty(0,T;L^2(\Omega))}\leq C.$$ Thus
(\ref{bb22}) yields $$\|u_{xx}\|_{L^2(0,T;L^2(\Omega))}\leq C.$$ It follows
from (\ref{aa3}), (\ref{bb3}), and (\ref{bb10}) that
$$\|\w_{xx}\|_{L^2(0,T;L^2(\Omega))}\leq C.$$
\end{proof}

\begin{Lemma}\label{lebb7}
\begin{align}
& \int(\rho u_t^2+\rho
|\w_t|^2+|\bb_t|^2+u_{xx}^2+|\w_{xx}|^2+|\bb_{xx}|^2+\kappa^2(\theta)\theta_x^2)dx\nonumber\\
&+\iint_{Q_T}(\lambda u_{xt}^2+\mu |\w_{xt}|^2+\nu |\bb_{xt}|^2+\rho\theta_t^2+\theta_{xx}^2)dx
dt\leq C.\label{bb24}
\end{align}
\end{Lemma}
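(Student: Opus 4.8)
The plan is to derive the top-order energy identity by differentiating the momentum, transverse-momentum and magnetic equations \eqref{aa2}--\eqref{aa4} in $t$, testing them respectively with $u_t$, $\w_t$, $\bb_t$, and by multiplying the energy equation \eqref{aa5} by $\kappa(\theta)\theta_t$. The first three produce $\tfrac12\frac{d}{dt}\int(\rho u_t^2+\rho|\w_t|^2+|\bb_t|^2)\,dx$ together with the dissipation $\int(u_{xt}^2+|\w_{xt}|^2+|\bb_{xt}|^2)\,dx$, after integrating the terms $u_{xxt}u_t$, etc.\ by parts (the boundary terms vanish since $u_t,\w_t,\bb_t$ vanish on $\partial\Omega$). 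The choice of the multiplier $\kappa(\theta)\theta_t$ for \eqref{aa5} is the \emph{crucial} point: writing $-\int(\kappa\theta_x)_x\,\kappa\theta_t=\int\kappa\theta_x(\kappa\theta_t)_x$ and expanding $(\kappa\theta_t)_x=\kappa'\theta_x\theta_t+\kappa\theta_{xt}$, the two cubic pieces $\int\kappa\kappa'\theta_x^2\theta_t$ cancel exactly against the term arising from $\frac{d}{dt}\int\tfrac12\kappa^2\theta_x^2$, leaving cleanly
\[
\int\rho\kappa(\theta)\theta_t^2\,dx+\frac{d}{dt}\int\tfrac12\kappa^2(\theta)\theta_x^2\,dx=\text{(source and convective terms)}.
\]
Since $\kappa\ge C^{-1}$ by \eqref{aa8}, the left side controls precisely the quantities $\int\kappa^2\theta_x^2$ (pointwise in $t$) and $\iint_{Q_T}\rho\theta_t^2$ appearing in \eqref{bb24}. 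Summing the four identities gives a single differential inequality whose left side reproduces the claimed norm.

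The principal device for handling the vacuum on the right-hand side is Lemma \ref{pona}(i): applied to $v=u_t$ and $v=\w_t$ it yields $\|u_t\|_{L^\infty}\le C\|u_{xt}\|_{L^2}+C\|\sqrt\rho u_t\|_{L^2}$, because $|\int\rho u_t\,dx|\le\|\rho\|_{L^1}^{1/2}\|\sqrt\rho u_t\|_{L^2}$. Thus every occurrence of the unweighted $u_t$ or $\w_t$ (which need not lie in $L^2$ where $\rho$ vanishes) is absorbed into a small multiple of the dissipation plus the weighted energy, the lower-order factors being bounded by Lemmas \ref{lebb2}, \ref{lebb5}, \ref{lebb6}. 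The pressure contribution $\int(P+\tfrac12|\bb|^2)_{xt}u_t$ is integrated by parts in $x$ to $-\int(P+\tfrac12|\bb|^2)_t\,u_{xt}$; using $P_t=\rho_t\theta+\rho\theta_t$, the dangerous piece $\int\rho\theta_t u_{xt}$ splits by Cauchy into a small multiple of $\int u_{xt}^2$ and $C\int\rho\theta_t^2$, the latter being exactly the temperature dissipation just produced, while $\int\rho_t\theta\,u_{xt}$ is controlled by $\|\rho_t\|_{L^2}\le C$ from Lemma \ref{lebb6} and \eqref{bb19}. The magnetic cross terms from the differentiated \eqref{aa4}, such as $\int(u_t\bb+u\bb_t)\cdot\bb_{tx}$, are treated identically: one integrates by parts in $x$ and uses $\|\bb\|_{L^\infty}\le C$, $\|\bb_x\|_{L^2}\le C$ (Lemma \ref{lebb5}) together with the $L^\infty$-bound on $u_t$ above.

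The worst remaining contributions are the temperature source terms $\int(u_x^2+|\w_x|^2+|\bb_x|^2)\kappa\theta_t$, in which $\theta_t$ is unweighted and $\kappa$ carries high powers of $\theta$. I would integrate the whole identity over $[0,T]$ and treat these by integration by parts in time, writing $\kappa(\theta)\theta_t=\partial_t\mathcal K(\theta)$ with $\mathcal K(\theta):=\int_0^\theta\kappa(s)\,ds$. This turns $\iint u_x^2\kappa\theta_t$ into a boundary-in-time term $\int u_x^2\mathcal K(\theta)\big|_0^T$ and $-\iint 2u_x u_{xt}\mathcal K(\theta)$. The boundary term is handled by $\|\mathcal K(\theta)\|_{L^\infty}\le C(1+\|\theta\|_{L^\infty}^{q+1})\le C(1+\|\kappa\theta_x\|_{L^2})$ via \eqref{bb19}, hence absorbed into a small multiple of $\int\kappa^2\theta_x^2$; the remaining integral is bounded, after Cauchy, by $\varepsilon\iint u_{xt}^2+C\iint u_x^2(1+\|\theta\|_{L^\infty}^{2q+2})$, and this is finite because $\int u_x^2\le C$ pointwise and $\iint\kappa^2\theta_x^2\le C$ are already known from \eqref{bb10}. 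The convective term $-\int\rho u\theta_x\kappa\theta_t$ splits into $\tfrac18\int\rho\kappa\theta_t^2+C\int\kappa\theta_x^2$, with $\int\kappa\theta_x^2\le C\int\kappa^2\theta_x^2$ since $\kappa\ge C^{-1}$ and $\|u\|_{L^\infty}\le C$.

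Collecting everything yields $\frac{d}{dt}\mathcal E(t)+\mathcal D(t)\le C\big(1+\|u_x\|_{L^\infty}+\|\w_x\|_{L^\infty}+\|\bb_x\|_{L^\infty}\big)\mathcal E(t)+C$, where $\mathcal E=\int(\rho u_t^2+\rho|\w_t|^2+|\bb_t|^2+\kappa^2\theta_x^2)$, $\mathcal D$ is the stated dissipation, and the coefficient lies in $L^1(0,T)$ by Lemmas \ref{lebb5}--\ref{lebb6}; Gronwall's inequality then gives the bound once $\mathcal E(0)$ is controlled, which follows from the compatibility conditions \eqref{aa9} (these express $\sqrt{\rho_0}\,u_t|_{t=0}$, $\sqrt{\rho_0}\,\w_t|_{t=0}$, $\sqrt{\rho_0}\,\theta_t|_{t=0}$ through $g_1,\mathbf{g}_2,g_3$) together with $(\rho_0,u_0,\w_0,\bb_0,\theta_0)\in H^2$. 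Finally, the pointwise bounds $\int(u_{xx}^2+|\w_{xx}|^2+|\bb_{xx}|^2)$ are read off from \eqref{aa2}--\eqref{aa4} rewritten as $u_{xx}=\rho u_t+\rho uu_x+P_x+\bb\cdot\bb_x$, etc., using $\int\rho u_t^2\le C$ and Lemmas \ref{lebb5}--\ref{lebb6}, while $\iint\theta_{xx}^2\le C$ follows from $(\kappa\theta_x)_x=\rho\theta_t+\rho u\theta_x-(u_x^2+|\w_x|^2+|\bb_x|^2)+Pu_x$ after expanding $(\kappa\theta_x)_x=\kappa\theta_{xx}+\kappa'\theta_x^2$ and absorbing the $\kappa'\theta_x^2$ term. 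I expect the main obstacle to be the simultaneous treatment of the magnetic cross terms and the temperature source terms, as these mix the unweighted time derivatives with high powers of $\theta$ near the vacuum; their control rests on pairing Lemma \ref{pona} with the special multiplier $\kappa(\theta)\theta_t$ and the time integration by parts, which is exactly where the coupling of the magnetic field with the large-oscillation flow must be handled with care.
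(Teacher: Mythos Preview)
Your proposal is correct and follows essentially the same route as the paper: differentiate \eqref{aa2}--\eqref{aa4} in time and test with $u_t,\w_t,\bb_t$, multiply \eqref{aa5} by $\kappa(\theta)\theta_t=\partial_t\int_0^\theta\kappa$, exploit the time-integration-by-parts for the source terms $(u_x^2+|\w_x|^2+|\bb_x|^2)\kappa\theta_t$, control unweighted $u_t,\w_t$ via Lemma~\ref{pona}, close by Gronwall, and then read off the $H^2$-bounds on $u,\w,\bb,\theta$ from the equations themselves. The only cosmetic difference is that the paper bounds the magnetic cross term $\int(u_t\bb+u\bb_t-\w_t)\cdot\bb_{xt}$ directly by Cauchy (without the extra integration by parts you mention) and compensates the resulting $C\|u_{xt}\|_{L^2}^2$ by weighting \eqref{bb25} and \eqref{bb28} with a sufficiently large constant before summing.
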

\begin{proof} Applying $\partial_t$ to (\ref{aa2}), we see that
$$
\rho u_{tt}+\rho uu_{xt}-\lambda u_{xxt}=-\left(P+\frac{1}{2}|\bb|^2\right)_{xt}-\rho_tu_t-\rho_tuu_x-\rho
u_tu_x.
$$
 Multiplying  the above equation by $u_t$, integrating them    over
$\Omega$, and   using (\ref{aa1}),
Lemmas \ref{lebb5} and  \ref{lebb6}, and Cauchy inequality,  we obtain
\begin{align}
&\frac{1}{2}\frac{d}{dt}\int\rho u_t^2dx+\int \lambda u_{xt}^2dx\nonumber\\
 =&\int\left(P+\frac{1}{2}|\bb|^2\right)_t u_{xt}dx-2\int\rho uu_t
u_{xt}dx-\int\rho_t uu_x u_t dx-\int\rho u_t^2u_x dx\nonumber\\
 \leq&\int(\rho\theta_t+\theta\rho_t+\bb\cdot \bb_t)u_{xt}dx+2\|\sqrt\rho
u_t\|_{L^2}\|\sqrt\rho\|_{L^\infty}\|u\|_{L^\infty}\|u_{xt}\|_{L^2}\nonumber\\
&+\|\rho_t\|_{L^2}\|u\|_{L^\infty}\|u_x\|_{L^2}\|u_t\|_{L^\infty}
+\|u_x\|_{L^\infty}\int\rho u_t^2dx\nonumber\\
 \leq&\epsilon_1\int
u_{xt}^2dx+C\int\rho\theta_t^2dx+\|\theta\|_{L^\infty}^2+\|\bb_t\|_{L^2}^2\nonumber\\
&+C\int\rho u_t^2dx+\|u_{xx}\|_{L^2}\int\rho u_t^2dx+C \label{bb25}
\end{align}
for any $0<\epsilon_1<1$.

Multiplying (\ref{aa5}) by $\kappa(\theta)\theta_t=\left(\int_0^\theta
k(\xi)d\xi\right)_t$, integrating them    over
$\Omega$, and using (\ref{aa1}) and  Lemmas \ref{lebb5} and
\ref{lebb6}, we deduce that
\begin{align}
&\frac{1}{2}\frac{d}{dt}\int \kappa^2(\theta)\theta_x^2dx+\int\rho
\kappa(\theta)\theta_t^2dx\nonumber\\
 =&-\int\rho u\theta_x\kappa(\theta)\theta_t dx-\int\rho\theta
u_x\kappa(\theta)\theta_t
dx\nonumber\\
&+\int(\lambda u_x^2+\mu |\w_x|^2+\lambda |\bb_x|^2)\left(\int_0^\theta
\kappa(\xi)d\xi\right)_tdx\nonumber\\
 \leq&\epsilon_2\int\rho \kappa(\theta)\theta_t^2dx+C\int\rho
u^2\kappa(\theta) \theta_x^2dx+C\int\rho\theta^2\kappa (\theta)u_x^2
dx\nonumber\\
&+\frac{d}{dt}\int(\lambda u_x^2+\mu |\w_x|^2+\nu|\bb_x|^2)\int_0^\theta
\kappa(\xi)d\xi
dx\nonumber\\
& -2\int(u_xu_{xt}+\w_x\cdot \w_{xt}+\bb_x\cdot \bb_{xt})\int_0^\theta
\kappa(\xi)d\xi dx\nonumber\\
 \leq&\epsilon_2\int\rho \kappa(\theta)\theta_t^2dx+C\int
\kappa\theta_x^2dx+C\|u_x\|_{L^\infty}^2\int\rho\theta^2\kappa
dx\nonumber\\
&+\frac{d}{dt}\int(u_x^2+|\w_x|^2+|\bb_x|^2)\int_0^\theta \kappa(\xi)d\xi
dx\nonumber\\
&+C(\|u_x\|_{L^2}\|u_{xt}\|_{L^2}+\|\w_x\|_{L^2}\|\w_{xt}\|_{L^2}+\|\bb_x\|_{L^2}\|\bb_{xt}\|_{L^2})
\left\|\int_0^\theta \kappa(\xi)d\xi\right\|_{L^\infty}\nonumber\\
 \leq&\epsilon_2\int\rho \kappa(\theta)\theta_t^2dx+C\int
\kappa(\theta)\theta_x^2dx+C(1+\|u_{xx}\|_{L^2}^2)\int\rho\theta^2(1+\theta^q)dx\nonumber\\
&+\frac{d}{dt}\int(\lambda u_x^2+\mu |\w_x|^2+\nu |\bb_x|^2)\int_0^\theta
\kappa(\xi)d\xi
dx\nonumber\\
&+C(\|u_{xt}\|_{L^2}+\|\w_{xt}\|_{L^2}+\|\bb_{xt}\|_{L^2})\|\theta(1+\theta^q)\|_{L^\infty}\nonumber\\
 \leq&\epsilon_2\int\rho
\kappa(\theta)\theta_t^2dx+\epsilon_3(\|u_{xt}\|_{L^2}^2+\|\w_{xt}\|_{L^2}^2+\|\bb_{xt}\|_{L^2}^2)\nonumber\\
&+C\int
\kappa(\theta)\theta_x^2dx+C(1+\|u_{xx}\|_{L^2}^2)\nonumber\\
&+\frac{d}{dt}\int(\lambda u_x^2+\mu|\w_x|^2+\nu|\bb_x|^2) \int_0^\theta
\kappa(\xi)d\xi dx  +C\|\theta(1+\theta^q)\|_{L^\infty}^2, \label{bb26}
\end{align}
for any $0<\epsilon_2, \epsilon_3<1$.

Applying the operator $\partial_t$ to (\ref{aa3}) gives
\begin{equation}
\rho \w_{tt}+\rho u\w_{xt}-\mu\w_{xxt}=-\rho_t\w_t-\rho_tu\w_x-\rho
u_t\w_x+\bb_{xt}.\label{bb27}
\end{equation}
Multiplying (\ref{bb27}) by $w_t$, integrating the result over $\Omega$, and
using (\ref{aa1}) and Lemmas \ref{lebb5} and
  \ref{lebb6}, we have
\begin{align}
&\frac{1}{2}\frac{d}{dt}\int\rho |\w_t|^2dx+\int\mu |\w_{xt}|^2dx\nonumber\\
 =&-2\int\rho u \w_t\cdot \w_{xt}dx-\int\rho_t u \w_x\cdot \w_t dx\nonumber\\
 &-\int\rho u_t \w_x\cdot
\w_tdx-\int \bb_x\cdot \w_{xt}dx\nonumber\\
 \leq&2\|\sqrt\rho \w_t\|_{L^2}\|\sqrt\rho
u\|_{L^\infty}\|\w_{xt}\|_{L^2}+\|\rho_t\|_{L^2}\|u\|_{L^\infty}\|\w_x\|_{L^2}\|\w_t\|_{L^\infty}\nonumber\\
&+\|\sqrt\rho \w_t\|_{L^2}\|\sqrt\rho
u_t\|_{L^2}\|\w_x\|_{L^\infty}+\|\bb_x\|_{L^2}\|\w_{xt}\|_{L^2}\nonumber\\
 \leq&C\|\sqrt\rho
\w_t\|_{L^2}\|\w_{xt}\|_{L^2}+C\|\w_t\|_{L^\infty}\nonumber\\
&+C\|\sqrt\rho \w_t\|_{L^2}\|\sqrt\rho
u_t\|_{L^2}\|\w_{xx}\|_{L^2}+C\|\w_{xt}\|_{L^2}\nonumber\\
 \leq&\epsilon_4\|w_{xt}\|_{L^2}^2+C+C\|\sqrt\rho
\w_t\|_{L^2}^2+C\|\w_{xx}\|_{L^2}^2\|\sqrt\rho
u_t\|_{L^2}^2, \label{bb28}
\end{align}
for any $0<\epsilon_4<1$.

Applying the operator $\partial_t$ to (\ref{aa4}) gives
$$\bb_{tt}-\nu\bb_{xxt}=-(u \bb-\w)_{xt}.$$
Multiplying the above equation by $\bb_t$, integrating the result over $\Omega$,
and using Lemma \ref{lebb5}, we
find that
\begin{align}
&\frac{1}{2}\frac{d}{dt}\int |\bb_t|^2dx+\int \nu |\bb_{xt}|^2dx\nonumber\\
=& \int(u\bb-\w)_t\cdot \bb_{xt}dx\nonumber\\
 =&\int(u_t\bb+\bb_tu-\w_t)\cdot \bb_{xt}dx\nonumber\\
 \leq&(\|\bb\|_{L^\infty}\|u_t\|_{L^2}+\|u\|_{L^\infty}\|\bb_t\|_{L^2}+\|\w_t\|_{L^2})\|\bb_{xt}\|_{L^2}\nonumber\\
 \leq&C(\|u_t\|_{L^2}+\|\bb_t\|_{L^2}+\|\w_t\|_{L^2})\|\bb_{xt}\|_{L^2}\nonumber\\
 \leq&C(\|u_{xt}\|_{L^2}+\|\bb_t\|_{L^2}+\|\w_{xt}\|_{L^2})\|\bb_{xt}\|_{L^2}\nonumber\\
 \leq&\frac{\nu}{2}\|\bb_{xt}\|_{L^2}^2+C(\lambda \|u_{xt}\|_{L^2}^2+\mu\|\w_{xt}\|_{L^2}^2+ \|\bb_t\|_{L^2}^2).\label{bb29}
\end{align}

Combining (\ref{bb25}), (\ref{bb26}), (\ref{bb19}), and (\ref{bb28}) with
(\ref{bb29}), taking $\epsilon_i \,(i=1,\dots,4)$ small enough, integrating the
resulting inequality over $(0,t)$, then we conclude that
\begin{equation}
\int(\rho u_t^2+\rho
|\w_t|^2+|\bb_t|^2+\kappa^2(\theta)\theta_x^2)dx+\iint_{Q_T}
(\lambda u_{xt}^2+\mu|\w_{xt}|^2+\nu|\bb_{xt}|^2+\rho\theta_t^2)dx dt\leq
C.\label{bb30}
\end{equation}
where we have used the following estimate:
\begin{align*}
&\int_0^t\left(\frac{d}{dt}\int(\lambda u_x^2+\mu|\w_x|^2+\nu|\bb_x|^2)\int_0^\theta
\kappa(\xi)d\xi dx\right)d\tau\\
 \leq&\int(\lambda u_x^2+\mu|\w_x|^2+\nu|\bb_x|^2)dx\left\|\int_0^\theta
\kappa(\xi)d\xi\right\|_{L^\infty}+C\\
 \leq& C\left\|\int_0^\theta \kappa(\xi)d\xi\right\|_{L^\infty}+C\\
 \leq& C\left\|\left(\int_0^\theta
\kappa(\xi)d\xi\right)_x\right\|_{L^2}+C\\
 \leq& C\|\kappa(\theta)\theta_x\|_{L^2}+C\leq\epsilon_5\|\kappa(\theta)\theta_x\|_{L^2}^2+C,
\end{align*}
for any $0<\epsilon_5<1$.

It follows from (\ref{aa2}), (\ref{aa3}), (\ref{aa4}), (\ref{bb30}), and
Lemmas \ref{lebb5} and \ref{lebb6} that
\begin{equation*}
 \int(\lambda u_{xx}^2+\mu|\w_{xx}|^2+\nu|\bb_{xx}|^2)dx\leq C.
\end{equation*}
\end{proof}

Noting the above estimate,  (\ref{aa5}),
and (\ref{bb30}), it follows that
\begin{align*}
\|\theta_{xx}\|_{L^2}^2 \leq&C\int(\theta_x^4+u_x^4+|\w_x|^4+|\bb_x|^4
+\rho\theta_t^2+u^2\theta_x^2+\theta^2u_x^2)dx\\
 \leq& C+C\int\theta_x^4dx+C\int\rho\theta_t^2dx\\
 \leq& C+C\|\theta_x^2\|_{L^\infty}\int\theta_x^2dx+C\int\rho\theta_t^2dx\\
 \leq& C+C\|(\theta_x^2)_x\|_{L^1}+C\int\rho\theta_t^2dx\\
 \leq& C+C\|\theta_x\theta_{xx}\|_{L^1}+C\int\rho\theta_t^2dx\\
 \leq& C+C\|\theta_{xx}\|_{L^2}+C\int\rho\theta_t^2dx,
\end{align*}
which yields
\begin{equation}
\|\theta_{xx}\|_{L^2}^2\leq C+C\int\rho\theta_t^2dx.\label{bb31}
\end{equation}

\begin{Lemma}\label{lebb8}
\begin{equation}
\int(\rho_{xx}^2+\rho_{xt}^2)dx+\iint_{Q_T}(\rho_{tt}^2+u_{xxx}^2)dx
dt\leq C.\label{bb32}
\end{equation}
\end{Lemma}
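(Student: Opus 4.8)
The plan is to split the four quantities into two groups. The genuinely coupled pair is $\rho_{xx}$ and $u_{xxx}$, which must be handled together by a differential inequality for $\|\rho_{xx}\|_{L^2}^2$ closed against a representation of $u_{xxx}$ coming from the momentum equation; the remaining two quantities $\rho_{xt}$ and $\rho_{tt}$ will then follow purely algebraically from the continuity equation \eqref{aa1}. Before starting I would record the consequences of Lemmas \ref{lebb5}--\ref{lebb7} that make everything integrable in time: from \eqref{bb19} and the bound on $\kappa(\theta)\theta_x$ in \eqref{bb24} one gets $\theta\in L^\infty(Q_T)$; since $\kappa\ge C^{-1}$ this also gives $\theta_x\in L^\infty(0,T;L^2)$, and interpolating with $\theta_{xx}\in L^2(Q_T)$ yields $\theta_x\in L^2(0,T;L^\infty)$. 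Likewise, since $u_t|_{\partial\Omega}=0$, Poincar\'e turns the bound $u_{xt}\in L^2(Q_T)$ into $u_t\in L^2(0,T;L^\infty)$, while $u_{xx}\in L^\infty(0,T;L^2)$ gives $u_x\in L^\infty(0,T;L^\infty)$ and $u_x^2\in L^\infty(0,T;L^2)$.

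First I would solve \eqref{aa2} for $u_{xx}=\rho u_t+\rho uu_x+(P+\tfrac12|\bb|^2)_x$ and differentiate once in $x$ to obtain a pointwise expression for $u_{xxx}$. Inspecting the resulting terms, the pieces $\rho_x u_t$, $\rho u_{xt}$, $\rho_x\theta_x$, $\rho\theta_{xx}$, $|\bb_x|^2$, $\bb\cdot\bb_{xx}$ and the convective remainders $\rho_x uu_x,\ \rho u_x^2,\ \rho uu_{xx}$ all have $L^2$-norms that are square-integrable in time by the preliminary observations (using $\bb_x\in L^\infty(0,T;L^\infty)$ from $\bb_{xx}\in L^\infty(0,T;L^2)$). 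The single term carrying the top density derivative is $\rho_{xx}\theta$, and since $\theta$ is bounded it obeys $\|\rho_{xx}\theta\|_{L^2}\le C\|\rho_{xx}\|_{L^2}$. Hence I expect the clean bound
\begin{equation*}
\|u_{xxx}\|_{L^2}\le G(t)+C\|\rho_{xx}\|_{L^2},\qquad \int_0^T G(t)^2\,dt\le C.
\end{equation*}

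Next I would differentiate \eqref{aa1} twice in $x$ to get $\rho_{xxt}+u\rho_{xxx}+3u_x\rho_{xx}+3\rho_x u_{xx}+\rho u_{xxx}=0$, multiply by $\rho_{xx}$ and integrate, using $u|_{\partial\Omega}=0$ to rewrite $\int u\rho_{xx}\rho_{xxx}=-\tfrac12\int u_x\rho_{xx}^2$. The term $\int u_x\rho_{xx}^2$ is controlled by $\|u_x\|_{L^\infty}\|\rho_{xx}\|_{L^2}^2$, the term $\int\rho_x u_{xx}\rho_{xx}$ by an Agmon estimate $\|u_{xx}\|_{L^\infty}\le\epsilon\|u_{xxx}\|_{L^2}+C_\epsilon$ together with $\|\rho_x\|_{L^2}\le C$, and the critical term $\int\rho u_{xxx}\rho_{xx}$ by $\|\rho\|_{L^\infty}\|u_{xxx}\|_{L^2}\|\rho_{xx}\|_{L^2}$. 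Substituting the representation of $\|u_{xxx}\|_{L^2}$ into these estimates and using Young's inequality produces
\begin{equation*}
\frac{d}{dt}\|\rho_{xx}\|_{L^2}^2\le C\|\rho_{xx}\|_{L^2}^2+G(t)^2,
\end{equation*}
so Gronwall's inequality, together with $\int_0^T G^2\le C$, yields $\rho_{xx}\in L^\infty(0,T;L^2)$. Feeding this back gives $\int_0^T\|u_{xxx}\|_{L^2}^2\le C+C\int_0^T\|\rho_{xx}\|_{L^2}^2\le C$.

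Finally, the two time-derivative bounds are algebraic. Differentiating $\rho_t=-u\rho_x-\rho u_x$ once in $x$ gives $\rho_{xt}=-u\rho_{xx}-2u_x\rho_x-\rho u_{xx}$, bounded in $L^\infty(0,T;L^2)$ by the established bounds on $\rho_{xx}$, $u_x$, $\rho_x$ and $u_{xx}$; and $\rho_{tt}=-u\rho_{xt}-\rho_x u_t-u_x\rho_t-\rho u_{xt}$ is square-integrable over $Q_T$ because $\rho_{xt}\in L^\infty(0,T;L^2)$, $u_t\in L^2(0,T;L^\infty)$, $\rho_t\in L^\infty(0,T;L^2)$ and $u_{xt}\in L^2(Q_T)$. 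The hard part is the closure in the third paragraph: the same object $u_{xxx}$ appears both as the forcing in the $\rho_{xx}$ equation and as a quantity one controls only modulo $\|\rho_{xx}\|_{L^2}$, so the estimate closes precisely because the density-dependent part of $u_{xxx}$ enters the $\rho_{xx}$ balance linearly and is absorbed by Gronwall; the vacuum is harmless here since $\rho$ never appears in a denominator and the delicate $\rho_x u_t$ term is tamed by the Poincar\'e bound on $u_t$.
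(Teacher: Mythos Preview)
Your proposal is correct and follows essentially the same route as the paper: derive a differential inequality for $\|\rho_{xx}\|_{L^2}^2$ from the twice-differentiated continuity equation, represent $\|u_{xxx}\|_{L^2}$ via the $x$-differentiated momentum equation as $G(t)+C\|\rho_{xx}\|_{L^2}$ with $G\in L^2(0,T)$, close by Gronwall, and then read off $\rho_{xt}$ and $\rho_{tt}$ algebraically from \eqref{aa1}. The only cosmetic difference is that you handle the cross term $\int\rho_x u_{xx}\rho_{xx}$ by bounding $\|u_{xx}\|_{L^\infty}$ (using $\|u_{xx}\|_{L^2}+\|u_{xxx}\|_{L^2}$), whereas the paper bounds $\|\rho_x\|_{L^\infty}$ instead; both lead to the same closed inequality.
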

\begin{proof} Applying the operator $\partial_x^2$ to (\ref{aa1}) gives
$$
\rho_{xxt}=-\rho_{xxx}u-3\rho_{xx}u_x-3\rho_xu_{xx}-\rho
u_{xxx}.
$$
Multiplying the above equation by $2\rho_{xx}$, integrating them    over
$\Omega$, and  using Lemmas
\ref{lebb7} and   \ref{lebb6}, we find that
\begin{align}
&\frac{d}{dt}\int\rho_{xx}^2dx\nonumber\\
 =&-5\int\rho_{xx}^2u_xdx-6\int\rho_x\rho_{xx}u_{xx}dx-2\int\rho\rho_{xx}u_{xxx}dx\nonumber\\
 \leq&5\|u_x\|_{L^\infty}\int\rho_{xx}^2dx+6\|\rho_x\|_{L^\infty}\|\rho_{xx}\|_{L^2}\|u_{xx}\|_{L^2}
+2\|\rho\|_{L^\infty}\|\rho_{xx}\|_{L^2}\|u_{xxx}\|_{L^2}\nonumber\\
 \leq&C\int\rho_{xx}^2dx+C\int u_{xxx}^2dx+C.\label{bb33}
\end{align}

Applying $\partial_x$ to (\ref{aa2}), integrating them    over
$\Omega$, and  using Lemmas \ref{lebb6}  and
  \ref{lebb7}, we infer that
\begin{align*}
& \|u_{xxx}\|_{L^2}\nonumber\\
 \leq&\left\|(\rho u)_{xt}+\left(\rho
u^2+P+\frac{1}{2}|\bb|^2\right)_{xx}\right\|_{L^2}\\
 \leq&\Big\|\rho_xu_t+\rho u_{xt}+\rho_xuu_x+\rho u_x^2+\rho
uu_{xx}+\rho_{xx}\theta+2\rho_x\theta_x\nonumber\\
& +\rho\theta_{xx}+\bb\cdot \bb_{xx}+|\bb_x|^2\Big\|_{L^2}\\
 \leq&\|\rho_x\|_{L^2}\|u_t\|_{L^\infty}+\|\rho\|_{L^\infty}\|u_{xt}\|_{L^2}+\|\rho_x\|_{L^2}
\|u\|_{L^\infty}\|u_x\|_{L^\infty}\\
&+\|\rho\|_{L^\infty}\|u_x\|_{L^4}^2 +\|\rho\|_{L^\infty}
\|u\|_{L^\infty}\|u_{xx}\|_{L^2}+\|\theta\|_{L^\infty}\|\rho_{xx}\|_{L^2}\\
&+2\|\rho_x\|_{L^\infty}\|\theta_x\|_{L^2}+\|\rho\|_{L^\infty}\|\theta_{xx}\|_{L^2}
+\|b\|_{L^\infty}\|b_{xx}\|_{L^2}+\|b_x\|_{L^4}^2\\
 \leq&C\|u_t\|_{L^\infty}+C\|u_{xt}\|_{L^2}+C+C\|\rho_{xx}\|_{L^2}+C\|\rho_x\|_{L^\infty}
+C\|\theta_{xx}\|_{L^2}\\
 \leq&C\|u_{xt}\|_{L^2}+C+C\|\rho_{xx}\|_{L^2}+C\|\theta_{xx}\|_{L^2}.
\end{align*}

Inserting the above estimates into (\ref{bb33}), and  using Lemma
\ref{lebb7}  and the Gronwall inequality, we get
$$\int\rho_{xx}^2dx+\iint_{Q_T}u_{xxx}^2dx dt\leq C.$$
Since $$\rho_{xt}=-(\rho u)_{xx},$$ it is easy to show that
\begin{align*}
\int\rho_{xt}^2dx \leq&C\int(\rho^2u_{xx}^2+\rho_x^2u_x^2+\rho_{xx}^2u^2)dx\\
 \leq&C\int(u_{xx}^2+\rho_{xx}^2)dx+C\|u_x\|_{L^\infty}^2\int\rho_x^2dx\leq
C.
\end{align*}

Finally, noting
\begin{align*}
\rho_{tt} = -(\rho u)_{xt}=-(\rho_tu+\rho u_t)_x
 = -(\rho_{xt}u+\rho_tu_x+\rho_xu_t+\rho u_{xt}),
\end{align*}
it holds that
\begin{align*}
\iint_{Q_T}\rho_{tt}^2dx dt
\leq&C\|u\|_{L^\infty(Q_T)}^2\iint_{Q_T}\rho_{xt}^2dx
dt+C\|\rho_t\|_{L^\infty(Q_T)}^2\iint_{Q_T}u_x^2dx dt\\
&+C\|\rho_x\|_{L^\infty(Q_T)}^2\iint_{Q_T}u_t^2dx
dt+C\|\rho\|_{L^\infty}^2\iint_{Q_T}u_{xt}^2dx dt\\
 \leq&C\iint_{Q_T}(\rho_{xt}^2+u_x^2+u_t^2+u_{xt}^2)dx dt\\
 \leq&C+C\iint_{Q_T}u_{xt}^2dx dt\leq C.
\end{align*}
\end{proof}

\begin{Lemma}\label{lebb9}
\begin{equation}
\int\rho\theta_t^2dx+\iint_{Q_T}|(\kappa(\theta)\theta_x)_t|^2dx
dt\leq C.\label{bb34}
\end{equation}
\end{Lemma}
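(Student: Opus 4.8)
We must bound
\[
\int\rho\theta_t^2\,dx+\iint_{Q_T}|(\kappa(\theta)\theta_x)_t|^2\,dx\,dt\le C.
\]
Let me plan the proof of this statement, which is the next regularity estimate beyond Lemma \ref{lebb7}.

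The plan is to differentiate the energy equation \eqref{aa5} in time and test against $\theta_t$. Rewriting \eqref{aa5} using \eqref{aa1} in the nonconservative form $\rho\theta_t+\rho u\theta_x-(\kappa(\theta)\theta_x)_x=u_x^2+|\w_x|^2+|\bb_x|^2-\rho\theta u_x$, I would apply $\partial_t$ to obtain an evolution equation for $\theta_t$:
\[
\rho\theta_{tt}+\rho u\theta_{xt}-(\kappa(\theta)\theta_x)_{xt}
=-\rho_t\theta_t-(\rho u)_t\theta_x-\rho_t u\theta_x
+2u_xu_{xt}+2\w_x\cdot\w_{xt}+2\bb_x\cdot\bb_{xt}-(\rho\theta u_x)_t.
\]
Multiplying by $\theta_t$ and integrating over $\Omega$, the principal terms $\int\rho\theta_{tt}\theta_t\,dx$ and $-\int(\kappa(\theta)\theta_x)_{xt}\theta_t\,dx$ combine (after integrating the latter by parts in $x$, using the insulated boundary condition $\theta_x|_{\partial\Omega}=0$) to give $\frac12\frac{d}{dt}\int\rho\theta_t^2\,dx$ plus $\int(\kappa(\theta)\theta_x)_t\theta_{xt}\,dx$. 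Since $(\kappa(\theta)\theta_x)_t=\kappa'(\theta)\theta_t\theta_x+\kappa(\theta)\theta_{xt}$, the latter produces the coercive term $\int\kappa(\theta)\theta_{xt}^2\,dx$ plus a lower-order cross term, which is exactly the mechanism yielding control of $\iint|(\kappa(\theta)\theta_x)_t|^2$.

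First I would establish the coercivity and dispose of the easy right-hand side terms. The terms $2\int u_xu_{xt}\theta_t\,dx$ and its $\w,\bb$ analogues are handled by Hölder, $\|\theta_t\|_{L^\infty}\le C\|\theta_{xt}\|_{L^2}+C\|\sqrt\rho\theta_t\|_{L^2}$ (via Lemma \ref{pona}(i) applied to $v=\theta_t$, using the mass lower bound), and absorbing the $\|u_{xt}\|_{L^2}$ factors, which are integrable in time by \eqref{bb30}. The terms involving $\rho_t,(\rho u)_t$ are controlled using the bounds $\|\rho_t\|_{L^\infty(0,T;L^2)}\le C$ and $\|\sqrt\rho u_t\|_{L^\infty(0,T;L^2)}\le C$ from Lemmas \ref{lebb6}--\ref{lebb7}, together with the $L^2(0,T;H^1)$ bounds on $u_t,\w_t,\bb_t$. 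The convective term $-\int\rho u\theta_{xt}\theta_t\,dx$ is absorbed into the coercive $\int\kappa(\theta)\theta_{xt}^2$ after using $\|\rho u\|_{L^\infty}\le C$ and Cauchy with a small parameter.

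The hard part will be the term $-\int(\rho\theta u_x)_t\theta_t\,dx$, expanded as $-\int(\rho_t\theta u_x+\rho\theta_t u_x+\rho\theta u_{xt})\theta_t\,dx$, because it couples the high temperature growth $\|\theta\|_{L^\infty}\le C(1+\|\kappa(\theta)\theta_x\|_{L^2})^{1/(q+1)}$ (from \eqref{bb19}) with the top-order quantities. The most delicate piece is $-\int\rho\theta u_{xt}\theta_t\,dx$: I would estimate it by $\|\sqrt\rho\theta_t\|_{L^2}\|\theta\|_{L^\infty}\|u_{xt}\|_{L^2}$, invoking \eqref{bb19} and the already-established $L^\infty(0,T;L^2)$ bound on $\kappa(\theta)\theta_x$ from \eqref{bb30} to control $\|\theta\|_{L^\infty}$ uniformly, then Young's inequality distributes the factors so that $\|u_{xt}\|_{L^2}^2$ (time-integrable by \eqref{bb30}) multiplies $\int\rho\theta_t^2$, setting up Gronwall. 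The cubic term $\int\rho\theta_t^2 u_x\,dx$ similarly uses $\|u_x\|_{L^\infty}\le C(1+\|u_{xx}\|_{L^2})$ and \eqref{bb31} relating $\|\theta_{xx}\|_{L^2}^2$ to $\int\rho\theta_t^2$, so one must be careful that the coefficient multiplying $\int\rho\theta_t^2$ remains integrable in time. After collecting all absorbable error terms on the left and writing everything in the form $\frac{d}{dt}\int\rho\theta_t^2\,dx+c\int\kappa(\theta)\theta_{xt}^2\,dx\le G(t)\int\rho\theta_t^2\,dx+H(t)$ with $\int_0^T(G+H)\,dt\le C$, I would conclude by the Gronwall inequality, provided the initial quantity $\int\rho_0\theta_t^2|_{t=0}\,dx$ is bounded — which follows from the compatibility condition \eqref{aa9}$_3$ since it gives $\sqrt{\rho_0}\,\theta_t|_{t=0}=g_3\in L^2$. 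Finally, $\iint_{Q_T}|(\kappa(\theta)\theta_x)_t|^2$ follows by expanding $(\kappa(\theta)\theta_x)_t=\kappa'(\theta)\theta_t\theta_x+\kappa(\theta)\theta_{xt}$ and bounding each piece using the $\iint\kappa(\theta)\theta_{xt}^2$ control just obtained together with the established bounds on $\theta_x,\theta_t$.
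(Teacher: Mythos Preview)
Your overall plan---differentiate \eqref{aa5} in time, test, and close by Gronwall using the bounds from Lemmas~\ref{lebb7}--\ref{lebb8} and the compatibility condition \eqref{aa9}---matches the paper's. The substantive difference is the test function: you multiply by $\theta_t$, while the paper multiplies by $\kappa(\theta)\theta_t=\big(\int_0^\theta\kappa(\xi)\,d\xi\big)_t$. This is not a cosmetic choice; it is what makes the argument close.

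With the paper's multiplier, integration by parts on the elliptic term together with the identity $(\kappa(\theta)\theta_t)_x=(\kappa(\theta)\theta_x)_t$ produces the clean dissipation $\int|(\kappa(\theta)\theta_x)_t|^2\,dx$ directly, with no cross term. The price is two commutator terms coming from the energy $\tfrac12\int\rho\kappa(\theta)\theta_t^2\,dx$, namely $\tfrac12\int\rho\theta_t^2(\kappa(\theta))_t\,dx$ and $\tfrac12\int\rho u\theta_t^2(\kappa(\theta))_x\,dx$; but both carry a factor of $\rho$, so they are bounded by $\|\sqrt\rho\,\theta_t\|_{L^2}^2$ times coefficients controlled via \eqref{bb36} and the known $L^2$-in-time bound on $\theta_{xx}$, which is exactly what Gronwall needs.

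With your multiplier $\theta_t$, the dissipation splits as $\int\kappa(\theta)\theta_{xt}^2\,dx+\int\kappa'(\theta)\theta_t\theta_x\theta_{xt}\,dx$, and you dismiss the second piece as ``lower order''. It is not. Any of the natural estimates (Cauchy, then $\|\theta_t\|_{L^\infty}\le C\|\theta_{xt}\|_{L^2}+C\|\sqrt\rho\,\theta_t\|_{L^2}$ from Lemma~\ref{pona}, or an integration by parts in $x$) leaves you with a term of the form $C\,\|\theta_{xt}\|_{L^2}^2$ whose constant depends on $K/M$, $\|\theta_x\|_{L^2}$, or $\|\theta_{xx}\|_{L^2}$---none of which is small or uniformly absorbable into $\int\kappa(\theta)\theta_{xt}^2$. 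The difficulty is precisely that $\theta_t$ appears without a $\rho$ weight (vacuum), so you cannot trade it for $\|\sqrt\rho\,\theta_t\|_{L^2}$ without passing through $\|\theta_t\|_{L^\infty}$, which reintroduces $\|\theta_{xt}\|_{L^2}$ with a large coefficient. Switching to the multiplier $\kappa(\theta)\theta_t$ removes this obstruction and yields the bound on $\iint_{Q_T}|(\kappa(\theta)\theta_x)_t|^2$ in one stroke, without the separate reconstruction step you propose at the end.
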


\begin{proof}
Applying $\partial_t$ to (\ref{aa5}) gives
\begin{align*}
& \rho\theta_{tt}+\rho u\theta_{xt}-(\kappa(\theta)\theta_x)_{xt}
\\
=&
2(u_xu_{xt}+w_xw_{xt}+b_xb_{xt})-pu_{xt}-p_tu_x-\rho_t\theta_t
-\rho_tu\theta_x-\rho u_t\theta_x.
\end{align*}
Multiplying the above equation by
$\kappa(\theta)\theta_t=\left(\int_0^\theta
\kappa(\xi)d\xi\right)_t$, integrating them    over
$\Omega$, and using (\ref{aa1}),
$(\kappa\theta_t)_x=(\kappa\theta_x)_t$, Lemmas \ref{lebb7} and
\ref{lebb8}, we infer that
\begin{align}
&\frac{1}{2}\frac{d}{dt}\int\rho
\kappa(\theta)\theta_t^2dx+\int|(\kappa\theta_t)_x|^2dx\nonumber\\
 =&\frac{1}{2}\int\rho\theta_t^2(\kappa(\theta))_t dx+\frac{1}{2}\int\rho
u\theta_t^2(\kappa(\theta))_x
dx\nonumber\\&+2\int(u_xu_{xt}+\w_x\cdot \w_{xt}+\bb_x\cdot \bb_{xt})\kappa(\theta)\theta_t
dx\nonumber\\
&-\int(Pu_{xt}+P_tu_x+\rho_t\theta_t+\rho_tu\theta_x+\rho
u_t\theta_x)\kappa(\theta)\theta_t dx\nonumber\\
 \leq&\frac{1}{2}\|(\kappa(\theta))_t\|_{L^\infty}\int\rho\theta_t^2dx
+\frac{1}{2}\int\rho\theta_t^2dx\|u\|_{L^\infty}\|(\kappa(\theta))_x\|_{L^\infty}\nonumber\\
&+2(\|u_x\|_{L^2}\|u_{xt}\|_{L^2}+\|\w_x\|_{L^2}\|\w_{xt}\|_{L^2}
+\|\bb_x\|_{L^2}\|\bb_{xt}\|_{L^2})\|\kappa(\theta)\theta_t\|_{L^\infty}\nonumber\\
&+\left(\|P\|_{L^\infty}\|u_{xt}\|_{L^2}+\|\rho_t\|_{L^2}\|\theta\|_{L^\infty}
\|u_x\|_{L^\infty}\right.\nonumber\\
&+\left.\|\sqrt\rho\theta_t\|_{L^2}\|\sqrt\rho\|_{L^\infty}
\|u_x\|_{L^\infty}\right)\|\kappa(\theta)\theta_t\|_{L^\infty}\nonumber\\
&+\int(\rho u)_x \kappa
\theta_t^2dx+\|\rho_t\|_{L^2}\|u\|_{L^\infty}
\|\theta_x\|_{L^2}\|\kappa(\theta)\theta_t\|_{L^\infty}\nonumber\\
&+\|\sqrt\rho u_t\|_{L^2}\|\sqrt\rho\|_{L^\infty}
\|\theta_x\|_{L^2}\|\kappa(\theta)\theta_t\|_{L^\infty}\nonumber\\
\leq&C\|\kappa(\theta)\theta_t\|_{L^\infty}\int\rho\theta_t^2dx+C
\|\theta_{xx}\|_{L^2}\int\rho\theta_t^2dx\nonumber\\
&+C(\|u_{xt}\|_{L^2}+\|\w_{xt}\|_{L^2}+\|\bb_{xt}\|_{L^2})\|\kappa(\theta)\theta_t\|_{L^\infty}\nonumber\\
&+C(1+\|\sqrt\rho\theta_t\|_{L^2})\|\kappa(\theta)\theta_t\|_{L^\infty}-\int\rho
u(k\theta_t^2)_xdx.\label{bb35}
\end{align}
Noting that
\begin{align}
\|\kappa(\theta)\theta_t\|_{L^\infty} \leq& C\left(\int\rho
\kappa(\theta)|\theta_t|dx+\|(\kappa(\theta)\theta_t)_x\|_{L^2}\right)\nonumber\\
 \leq&C\left(1+\int\rho
\kappa(\theta)\theta_t^2dx+\|(\kappa(\theta)\theta_t)_x\|_{L^2}\right),\label{bb36}
\end{align}
and
\begin{align}
&-\int\rho u(\kappa(\theta)\theta_t^2)_x dx\nonumber\\
 =&-\int\rho u(\kappa(\theta) \theta_t)_x\theta_t dx-\int\rho u
\kappa(\theta) \theta_t\theta_{xt}dx\nonumber\\
 \leq&\|u\|_{L^\infty}\|\sqrt\rho\theta_t\|_{L^2}
\|\sqrt\rho(\kappa (\theta)\theta_t)_x\|_{L^2}\nonumber\\
&+\|u\|_{L^\infty}\|\sqrt\rho\theta_t\|_{L^2}\big\|\sqrt\rho[(\kappa(\theta)\theta_t)_x-
\kappa'(\theta)\theta_x\theta_t]\big\|_{L^2}\nonumber\\
 \leq&C\|\sqrt\rho\theta_t\|_{L^2}\|(\kappa(\theta) \theta_t)_x\|_{L^2}
+C\|\sqrt\rho\theta_t\|_{L^2}^2\|\theta_x\|_{L^\infty}\nonumber\\
 \leq&C\|\sqrt\rho\theta_t\|_{L^2}\|(\kappa(\theta)\theta_t)_x\|_{L^2}
+C\|\theta_{xx}\|_{L^2}\|\sqrt\rho\theta_t\|_{L^2}^2.\label{bb37}
\end{align}

Inserting (\ref{bb36}) and (\ref{bb37}) into (\ref{bb35}) and using
the Gronwall inequality, we arrive at (\ref{bb34}).
\end{proof}

\begin{Lemma}\label{lebb10}
\begin{equation}
\int\theta_{xx}^2dx+\iint_{Q_T}\theta_{xxx}^2dxdt\leq C.\label{bb38}
\end{equation}
\end{Lemma}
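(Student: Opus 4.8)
The plan is to split \eqref{bb38} into the pointwise-in-time bound on $\theta_{xx}$ and the space--time bound on $\theta_{xxx}$, upgrading the latter from the internal energy equation. For the first term I would simply combine the inequality \eqref{bb31} obtained in Lemma \ref{lebb7}, namely $\|\theta_{xx}\|_{L^2}^2\le C+C\int\rho\theta_t^2\,dx$, with the pointwise-in-time bound $\int\rho\theta_t^2\,dx\le C$ furnished by \eqref{bb34} in Lemma \ref{lebb9}. This gives $\int\theta_{xx}^2\,dx\le C$ at once, and together with the lower-order bounds already established it yields $\theta\in L^\infty(0,T;H^2)$, so in particular $\|\theta_x\|_{L^\infty(Q_T)}\le C$ and $\|\theta\|_{L^\infty(Q_T)}\le C$; the latter keeps $\kappa(\theta),\kappa'(\theta),\kappa''(\theta)$ bounded along the solution (here I use $\kappa\in C^2$ and \eqref{aa8}), a fact I would use freely below.

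The heart of the matter is the bound on $\theta_{xxx}$, and the only delicate point will be a term of the form $\rho_x\theta_t$, for which the weighted control $\int\rho\theta_t^2\,dx\le C$ is not by itself enough. To remove the weight I would apply the Poincar\'e-type inequality of Lemma \ref{pona}(i) to the function $\kappa(\theta)\theta_t$, whose spatial integral $\int\rho$ is pinched between two positive constants by Lemma \ref{lebb1}. Using $(\kappa(\theta)\theta_t)_x=(\kappa(\theta)\theta_x)_t$ together with the bounds $\iint_{Q_T}|(\kappa(\theta)\theta_x)_t|^2\le C$ and $|\int\rho\kappa(\theta)\theta_t\,dx|\le C(\int\rho\theta_t^2)^{1/2}\le C$ from Lemma \ref{lebb9}, I obtain $\int_0^T\|\kappa(\theta)\theta_t\|_{L^\infty}^2\,dt\le C$, and since $\kappa(\theta)\ge C^{-1}$ this yields the key estimate $\int_0^T\|\theta_t\|_{L^\infty}^2\,dt\le C$, in particular $\iint_{Q_T}\theta_t^2\le C$.

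With this in hand I would differentiate the energy balance. Writing $K:=\kappa(\theta)\theta_x$ and using \eqref{aa1} to simplify $(\rho\theta)_t+(\rho u\theta)_x=\rho\theta_t+\rho u\theta_x$, equation \eqref{aa5} reads $K_x=\rho\theta_t+\rho u\theta_x+\rho\theta u_x-u_x^2-|\w_x|^2-|\bb_x|^2$. Differentiating in $x$ and expanding $K_{xx}=\kappa(\theta)\theta_{xxx}+3\kappa'(\theta)\theta_x\theta_{xx}+\kappa''(\theta)\theta_x^3$, I solve for $\theta_{xxx}$ and, since $\kappa(\theta)\ge C^{-1}$, estimate pointwise
$$|\theta_{xxx}|\le C\big(|K_{xx}|+|\theta_x||\theta_{xx}|+|\theta_x|^3\big).$$
The last two contributions are harmless, as $\iint_{Q_T}(\theta_x^2\theta_{xx}^2+\theta_x^6)\le C\|\theta_x\|_{L^\infty(Q_T)}^2\iint_{Q_T}\theta_{xx}^2+C\|\theta_x\|_{L^\infty(Q_T)}^4\iint_{Q_T}\theta_x^2\le C$ by the first step. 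For $K_{xx}=\big(\rho\theta_t+\rho u\theta_x+\rho\theta u_x-u_x^2-|\w_x|^2-|\bb_x|^2\big)_x$ I would bound the terms one by one: $\rho_x\theta_t$ is controlled by $\int_0^T\|\theta_t\|_{L^\infty}^2\le C$ together with $\|\rho_x\|_{L^\infty(Q_T)}\le C$ (from Lemmas \ref{lebb6} and \ref{lebb8}); the term $\rho\theta_{xt}$ is rewritten through $\kappa(\theta)\theta_{xt}=(\kappa(\theta)\theta_x)_t-\kappa'(\theta)\theta_x\theta_t$, so that $\iint_{Q_T}(\rho\theta_{xt})^2\le C\iint_{Q_T}|(\kappa(\theta)\theta_x)_t|^2+C\|\theta_x\|_{L^\infty(Q_T)}^2\iint_{Q_T}\rho\theta_t^2\le C$; and every remaining term (such as $\rho_xu\theta_x$, $\rho u\theta_{xx}$, $\rho\theta u_{xx}$, $u_xu_{xx}$, $\w_x\cdot\w_{xx}$, $\bb_x\cdot\bb_{xx}$) is a product of an $L^\infty(Q_T)$ factor with an $L^\infty(0,T;L^2)$ factor supplied by Lemmas \ref{lebb5}--\ref{lebb8} and the first step, hence lies in $L^2(Q_T)$. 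Collecting these bounds gives $\iint_{Q_T}K_{xx}^2\le C$ and therefore $\iint_{Q_T}\theta_{xxx}^2\le C$, which completes \eqref{bb38}.

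I expect the decisive step to be the weight-removal estimate $\int_0^T\|\theta_t\|_{L^\infty}^2\,dt\le C$: once the unweighted control of $\theta_t$ is available, all occurrences of $\theta_t$, $\theta_{xt}$ and $\rho_x\theta_t$ become routine, and the remainder is bookkeeping of products of quantities already bounded in Lemmas \ref{lebb5}--\ref{lebb9}.
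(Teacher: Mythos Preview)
Your proposal is correct and follows essentially the same route as the paper's own proof: first combine \eqref{bb31} with \eqref{bb34} to get $\|\theta_{xx}\|_{L^2}\le C$, then use Lemma~\ref{pona} together with the identity $(\kappa(\theta)\theta_t)_x=(\kappa(\theta)\theta_x)_t$ and the bound \eqref{bb34} to obtain the key unweighted estimate $\int_0^T\|\theta_t\|_{L^\infty}^2\,dt\le C$, deduce $\iint_{Q_T}\theta_{xt}^2\le C$, and finally differentiate \eqref{aa5} in $x$, isolate $\kappa(\theta)\theta_{xxx}$, and bound the remaining terms. The only cosmetic difference is that you first record $\theta\in L^\infty(0,T;H^2)$ and hence $\|\theta\|_{L^\infty(Q_T)}+\|\theta_x\|_{L^\infty(Q_T)}\le C$ explicitly, which streamlines the treatment of $\kappa,\kappa',\kappa''$ and of the products $\theta_x\theta_{xx}$, $\theta_x^3$; the paper uses the same facts but leaves them implicit.
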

\begin{proof}
It follows from (\ref{bb31}) and (\ref{bb34}) that
\begin{equation}
\int\theta_{xx}^2dx\leq C.\label{bb39}
\end{equation}
It is easy to verify that
\begin{equation}
\int_0^T\|\theta_t\|_{L^\infty}^2dt\leq\int_0^T\|\kappa(\theta)\theta_t\|_{L^\infty}^2dt
\leq C\int_0^T\|(\kappa(\theta)\theta_t)_x\|_{L^2}^2dt+C\leq C.\label{bb40}
\end{equation}

Since
$$
\kappa(\theta)\theta_{xt}=(\kappa(\theta)\theta_t)_x-\kappa'(\theta)\theta_t\theta_x,
$$
by applying (\ref{bb40}) and Cauchy inequality, we have
\begin{align}
\iint_{Q_T}\theta_{xt}^2dx dt
\leq&C\iint_{Q_T}\kappa^2(\theta)\theta_{xt}^2dx
dt\nonumber\\
 \leq&C\iint_{Q_T}|(\kappa(\theta)\theta_t)_x|^2dx
dt+C\iint_{Q_T}(\kappa'(\theta))^2\theta_t^2\theta_x^2dx dt\nonumber\\
 \leq&C+C\int_0^T\|\theta_t\|_{L^\infty}^2\|\theta_x\|_{L^2}^2dt\nonumber\\
 \leq&C+C\int_0^T\|\theta_t\|_{L^\infty}^2dt\leq C,\label{bb41}
\end{align}

Applying the operator $\partial_x$ to (\ref{aa5}) gives
\begin{align*}
\kappa(\theta)\theta_{xxx} =&-3\kappa'(\theta)\theta_x\theta_{xx}-\kappa''(\theta)\theta_x^3-2(\lambda u_xu_{xx}
+\mu \w_x\cdot\w_{xx}+\nu\bb_x\cdot\bb_{xx})\\
&-\rho_x\theta_t-\rho\theta_{xt}-(\rho u\theta_x)_x-(\rho\theta
u_x)_x,
\end{align*}
whence
\begin{align}
\int\theta_{xxx}^2dx \leq&\int \kappa^2(\theta)\theta_{xxx}^2dx\leq
C\int\theta_x^2\theta_{xx}^2dx+C\int\theta_x^6dx\nonumber\\
&+C\int(u_x^2u_{xx}^2+|\w_x|^2|\w_{xx}|^2+|\bb_x|^2|\bb_{xx}|^2)dx\nonumber\\
&+C\|\rho_x\|_{L^\infty}^2\int\theta_t^2dx+C\|\rho\|_{L^\infty}^2
\int\theta_{xt}^2dx+C\|\rho
u\|_{L^\infty}^2\int\theta_{xx}^2dx\nonumber\\
&+C\|\rho_x\|_{L^\infty}^2\|u\|_{L^\infty}^2\|\theta_x\|_{L^2}^2
+C\|\rho\|_{L^\infty}^2\|u_x\|_{L^\infty}^2\|\theta_x\|_{L^2}^2\nonumber\\
&+C\|\rho_x\|_{L^\infty}^2\|\theta\|_{L^\infty}^2\|u_x\|_{L^2}^2
+C\|\rho\|_{L^\infty}^2\|\theta_x\|_{L^\infty}^2\|u_x\|_{L^2}^2\nonumber\\
&+C\|\rho\|_{L^\infty}^2\|\theta\|_{L^\infty}^2\|u_{xx}\|_{L^2}^2\nonumber\\
\leq&C+C\int\theta_t^2dx+C\int\theta_{xt}^2dx,\label{bb42}
\end{align}
by Lemma \ref{lebb7}, Lemma \ref{lebb8} and Lemma \ref{lebb9}.

The estimates (\ref{bb40}), (\ref{bb41}) and (\ref{bb42}) imply
$$\iint_{Q_T}\theta_{xxx}^2dxdt\leq C.$$
\end{proof}

By combining all the estimates obtained above, we get sufficient  a priori  estimates uniformly with $\delta$
 to take the limit $\delta \rightarrow 0^+$ and then extend the local strong solutions to be global one.
 Since the process is standard \cite{CH06,FYb}, we omit
 them here for brevity. Hence the proof of Theorem \ref{thaa1} is completed.

\bigskip
 \noindent
{\bf Acknowledgements:}
The authors are indebted to the referees for their careful reading  and valuable suggestions which improved the presentation of our paper.
 Fan was supported by NSFC (No. 11171154). Li was supported by NSFC (Grant Nos.  11271184, 11671193), China Scholarship Council and PAPD.



 \end{document}